\documentclass[11pt]{amsart}
\usepackage{extsizes}
\usepackage{fullpage}
\usepackage{amsfonts,xcolor,url,verbatim}
\usepackage{amssymb}
\usepackage{xcolor}
\usepackage{dsfont}

\usepackage{todonotes}
\usepackage{hyperref}

\usepackage[backend=bibtex, sorting=none, bibstyle=alphabetic, citestyle=alphabetic, sorting=nyt, maxbibnames=99, giveninits=true, isbn=false, url=false, doi=false, eprint=false]{biblatex}  
\renewbibmacro{in:}{}
\bibliography{references.bib}
\usepackage{amssymb, calrsfs, graphics, graphicx, enumerate, enumitem, url, xcolor, hyperref, mathrsfs, listings, comment}
\usepackage{tabularx}
\usepackage[ruled, lined, linesnumbered, longend]{algorithm2e}
\usepackage[justification=centering]{caption}
\newtheorem{theorem}{Theorem}[section]
\newtheorem{lemma}[theorem]{Lemma}

\newtheorem{proposition}[theorem]{Proposition}

\newtheorem{corollary}[theorem]{Corollary}

\numberwithin{equation}{section}

\theoremstyle{remark}
\newtheorem{remark}{Remark}

\renewcommand{\Re}{{\mathfrak{Re}}}
\renewcommand{\Im}{{\mathfrak{Im}}}

\lstdefinestyle{CStyle}{
    basicstyle=\footnotesize,
    breakatwhitespace=false,         
    breaklines=true,                 
    captionpos=b,                    
    keepspaces=true,                 
    numbers=left,                    
    numbersep=5pt,                  
    showspaces=false,                
    showstringspaces=false,
    showtabs=false,                  
    tabsize=2,
    language=C
}

\def\p{\mathfrak{p}}

\def\P{\mathfrak{P}}

\newcommand{\Gal}{\mathrm{Gal}}
\newcommand{\GL}{\mathrm{GL}}
\newcommand{\norm}{\mathrm{N}}
\newcommand{\tr}{\mathrm{tr}}
\newcommand{\GO}{\mathrm{GO}}

\title[Counting zeros of Artin $L$-functions]{Counting zeros of Artin $L$-functions}

\author[C. Bellotti]{Chiara Bellotti}
\address[Chiara  Bellotti]{School of Science\\
The University of New South Wales, Canberra, Australia}
\email{c.bellotti@unsw.edu.au}

\author[P.-J. Wong]{Peng-Jie Wong}
\address[Peng-Jie Wong]{National Sun Yat-Sen University\\Department of Applied Mathematics\\
Kaohsiung City, Taiwan}
\email{pjwong@math.nsysu.edu.tw}

\keywords{Artin $L$-functions, Hecke $L$-functions, zero-density estimates, explicit estimates}
\subjclass[2020]{Primary 11R42; Secondary 11M06, 11M26, 11Y35}

\thanks{P.J.W. is currently supported by the NSTC grant 114-2628-M-110-006-MY4. C.B. was supported by Australian Research Council Discovery Project DP240100186 and by an Australian Mathematical Society Lift-off Fellowship.}

\begin{document}

\begin{abstract}
In this article, assuming Artin's (holomorphy) conjecture, we establish an explicit asymptotic formula for the number of non-trivial zeros, up to any given height $T\ge 1$, of Artin $L$-functions. As a consequence, our result yields an unconditional explicit zero-counting formula for  Hecke $L$-functions over any number field. In addition, our result improves the recent work of Amberger on Dedekind and Riemann zeta functions and the previous work of Bennett-Martin-O’Bryant-Rechnitzer on Dirichlet $L$-functions for sufficiently large $T$.
\end{abstract}

\maketitle

\section{Introduction}

Let $L/K$ be a Galois extension of number fields with Galois group $G=\Gal(L/K)$. For any irreducible representation $\rho: G\rightarrow \GL(V)$, we let $\chi=\tr \rho$ denote its character, and we recall the Artin $L$-function attached to $\chi$ is defined by
\begin{equation}\label{def:artinfunction}
    L(s,\chi, L/K) = \prod_{\p} \det(I -\rho|^{V^{I_{\mathfrak{P}}}}(\sigma_{\mathfrak{P}})\norm(\p)^{-s}   )^{-1},
\end{equation}
for $\Re(s)>1$, where the product runs over primes $\p$ of $K$, $V^{I_{\mathfrak{P}}}$ is the subspace of $V$ fixed by the inertia group $I_{\mathfrak{P}}$ of a prime $\mathfrak{P}$ of $L$ above $\p$, and $\sigma_{\mathfrak{P}}$ is the Frobenius element at $\P$. It follows from the works of Artin,  Brauer, and Hecke that $L(s,\chi, L/K)$ has an analytic continuation to a meromorphic function on $\Bbb{C}$. Artin's (holomorphy) conjecture asserts that if $\chi$ is non-trivial, then $L(s,\chi, L/K)$ further extends to an entire function. 

When $\chi(1)=1$, Artin's conjecture is proven as $L(s,\chi, L/K)$ corresponds to a Hecke $L$-function. More generally, Artin showed that his conjecture holds for all representations induced from 1-dimensional representations. Furthermore, if $\rho$ is 2-dimensional and admits a solvable image, Artin's conjecture is valid by the works of Langlands \cite{Lan80} and Tunnell \cite{Tun81}. Moreover, when $\rho$ is odd 2-dimensional over $\Bbb{Q}$, Artin's conjecture follows from Serre's modularity conjecture proven by Khare and Wintenberger \cite{KH09-1,KH09-2}. Furthermore, Ramakrishnan 
\cite{Ra02} proved Artin's conjecture for solvable representations $\rho$ of $\GO(4)$-type. In general, Artin's conjecture remains open and is an active area of research within the Langlands programme. (For a more detailed discussion, we shall refer the interested reader to \cite{Wong2018} and the references therein.) 

It is worth noting that Artin $L$-functions are vast generalisations of Dedekind zeta functions and Hecke $L$-functions. In particular, when $L=K=\Bbb{Q}$, one recovers the Dedekind zeta function
$\zeta_K(s)$ of $K$, and the case $L=K=\Bbb{Q}$ yields the Riemann zeta function. Meanwhile, when $L=\Bbb{Q}(\zeta_q)$, the $q$-th cyclotomic field, they are Dirichlet $L$-functions (modulo $q$). 
Similar to Dedekind zeta functions and Hecke $L$-functions, the (non-trivial) zeros of Artin $L$-functions encode deep arithmetic information of $L/K$ and help one to establish the strongest effective forms of the Chebotarev density theorem under Artin's conjecture (see \cite{murty1988modular, Murty1997, Murty2018}).

An elementary but prominent question is to count the zeros of $L(s,\chi, L/K)$ in the critical strip $0<\Re(s)<1$. For $T\geq 0$, we set 
\[
N(T,\chi) = \# \{ \rho \in  \Bbb{C}  \mid  L(\rho,\chi, L/K) =0,\  0  <\beta <  1, \ |\gamma| \leq T\},
\]
counted with multiplicity if any multiple zeros appear. Indeed, to make the aforementioned effective results explicit, it is natural to require a precise determination of the implied constants for the estimate of $N(T,\chi)$.

Throughout, for a number field $F$, we will denote its degree and absolute discriminant by $n_F$ and $d_F$, respectively. 
The main objective of this article is to prove the following theorem.

\begin{theorem}\label{main-thm}
Let $L/K$ be a Galois extension of number fields, and let $\chi$ be a non-trivial irreducible character of $\Gal(L/K)$. Assume Artin's conjecture for $L(s,\chi,L/K)$. Put
\[
        A(\chi)=d_K^{\chi(1)}\mathrm N(\mathfrak {f}(\chi))
        \quad\text{and}\quad
        m_\chi=n_K\chi(1).
\]
For every $T\ge 1$, the following estimates hold
\[
\left|
N(T,\chi)
-
\frac{T}{\pi}
\log\left(
A(\chi)
\left(\frac{T}{2\pi e}\right)^{m_\chi}
\right)
\right|
\le
0.1892\left(
\log A(\chi)+m_\chi\log T
\right)
+
9.484\,m_\chi,
\]
and
\[
\left|
N(T,\chi)
-
\frac{T}{\pi}
\log\left(
A(\chi)
\left(\frac{T}{2\pi e}\right)^{m_\chi}
\right)
\right|
\le
0.194\left(
\log A(\chi)+m_\chi\log T
\right)
+
7.907\,m_\chi.
\]
\end{theorem}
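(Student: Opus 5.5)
The plan is the classical Backlund/Rosser argument via the argument principle applied to the completed Artin $L$-function. Under Artin's conjecture, the completed function
\[
\Lambda(s,\chi)=A(\chi)^{s/2}\,\gamma_\chi(s)\,L(s,\chi,L/K),\qquad \gamma_\chi(s)=\Gamma_{\mathbb R}(s)^{a_\chi}\,\Gamma_{\mathbb R}(s+1)^{b_\chi},
\]
with $a_\chi+b_\chi=m_\chi$ and $\Gamma_{\mathbb R}(s)=\pi^{-s/2}\Gamma(s/2)$, is entire of order one. It satisfies $\Lambda(s,\chi)=W(\chi)\Lambda(1-s,\bar\chi)$ with $|W(\chi)|=1$, and its zeros are exactly the non-trivial zeros. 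Because $\chi$ need not be real, the zeros are not symmetric about the real axis. I will therefore count zeros in $[-T,0]$ and $[0,T]$ separately, writing
\[
N(T,\chi)=N^+(T,\chi)+N^+(T,\bar\chi)
\]
(up to a negligible convention for zeros on $\gamma=0$).

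Fix $\sigma_1>1$ to be optimised, say around $1.5$–$3$. Apply the argument principle to the rectangle with vertices $1-\sigma_1,\ \sigma_1,\ \sigma_1+iT,\ 1-\sigma_1+iT$, perturbing $T$ if a zero lies on the boundary. The functional equation pairs the left half of the contour with the right half for $\bar\chi$, and this gives
\[
\pi N(T,\chi)=2\Delta\arg A(\chi)^{s/2}+\Delta\arg\gamma_\chi(s)+\Delta\arg\gamma_{\bar\chi}(s)+\Delta\arg L(s,\chi)+\Delta\arg L(s,\bar\chi),
\]
where the changes in argument are taken along $\sigma_1\to\sigma_1+iT\to\tfrac12+iT$.

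The main terms come from two pieces. The conductor factor contributes exactly $\tfrac{T}{2}\log A(\chi)$ per character. The gamma factors contribute
\[
\sum\Im\log\Gamma\Bigl(\tfrac{1/2+iT+\kappa}{2}\Bigr)-\tfrac{T}{2}\log\pi .
\]
I will evaluate these with the explicit Stirling formula, whose remainder is bounded by $1/(6|z|)$-type terms (valid for $T\ge1$ with explicit constants). This produces $\tfrac{m_\chi T}{2}\log\tfrac{T}{2\pi e}$ plus $O(m_\chi)$ with an explicit constant, including the $-m_\chi\pi/8$-type shifts.

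The error term comes from the argument of $L$ along the vertical segment and the horizontal segment. On $\Re s=\sigma_1$ the argument is bounded by $|\log L(\sigma_1+it,\chi)|\le m_\chi\log\zeta(\sigma_1)$, using the Euler product and $|\chi(g)|\le\chi(1)$. For the horizontal segment I will use Backlund's trick. Set
\[
f(s)=\tfrac12\bigl(L(s+iT,\chi)^N+\overline{L(\bar s+iT,\chi)}^{\,N}\bigr).
\]
Each sign change of $\Re L^N$ on $[\tfrac12,\sigma_1]+iT$ corresponds to a zero of $f$ on a real segment. Counting these zeros with Jensen's formula on a disc centred at $\sigma_1$ of radius $r=\sigma_1-\tfrac12+\epsilon$, as in Rosser, McCurley and Trudgian, bounds $|\Delta\arg L|$ by
\[
\frac{\pi}{2}+\frac{\pi}{N\log(r'/r)}\Bigl(\max_{|z-\sigma_1|=r'}\log|L(z+iT)|-\log|L(\sigma_1+iT)|\Bigr).
\]
For the maximum on the big disc, which reaches into $\Re z<0$, I need an explicit convexity bound: $|L(s,\chi)|\le C\,(A(\chi)\,|s+1|^{m_\chi})^{(1+\eta-\sigma)/2}\zeta(1+\eta)^{m_\chi}$ in $-\eta\le\sigma\le1+\eta$. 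I will obtain this from Phragmén–Lindelöf, applied to $L$ under Artin's conjecture (the Rademacher-type bound); the functional equation and Stirling give the bound on the left line. Letting $N\to\infty$ and averaging the Jensen estimate via the standard Cauchy–Schwarz/integral refinement yields a bound $c_1(\log A(\chi)+m_\chi\log T)+c_2m_\chi$.

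Finally I will optimise $\sigma_1$, $\eta$, and $r'$ numerically. The two stated pairs of constants, $(0.1892,9.484)$ and $(0.194,7.907)$, should correspond to two different parameter choices trading the $\log$ coefficient against the constant. The small cases $T$ near $1$ are absorbed by the constant, since Stirling is only used with $|z|\ge\tfrac12$.

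The main obstacle is the Backlund–Jensen step. Obtaining a coefficient as small as $0.19$ requires a sharp convexity bound near and left of the $1$-line, uniform in the conductor and degree, so Phragmén–Lindelöf must be done carefully with explicit gamma-ratio bounds. I would first try the Trudgian/Bennett et al.\ framework, integrating $\log|L|$ over the circle with the convexity exponent varying with $\Re z$ rather than taking the maximum.
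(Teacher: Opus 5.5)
You are proposing the Backlund--Jensen--convexity method of McCurley, Trudgian and Bennett--Martin--O'Bryant--Rechnitzer. The genuine gap is that this method cannot reach the constants in the statement. The values $C_1=0.1892$ and $C_1=0.194$ lie below the floor of that scheme, however sharply you make the convexity bound explicit.

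In your setup, the coefficient of $\log A(\chi)+m_\chi\log T$ is, at best (that is, already pairing the sign changes of $\Re L^N$ on both sides of $\Re s=\frac12$ via the functional equation),
\[
\frac{1}{\log(R/r)}\cdot\frac{1}{2\pi}\int_0^{2\pi}\ell(\sigma_1+R\cos\theta)\,d\theta,\qquad r=\sigma_1-\tfrac12 .
\]
Here $\ell(\sigma)$ is the convexity exponent: $0$ for $\sigma\ge1$, $\frac{1-\sigma}{2}$ for $0\le\sigma\le1$, and $\frac12-\sigma$ for $\sigma\le0$. Even with a lossless convexity bound, the optimum occurs as $\sigma_1\to1^+$. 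For $R=1$ the expression equals $(2\pi\log2)^{-1}=(\pi\log4)^{-1}\approx0.2296$, which is Trudgian's asserted limit in \cite{Tr15}. Optimizing over $R$ gives about $0.2274$, which is exactly the $0.22737$ of \cite{BMOR20} and \cite{HSW21-Dedekind}. So the step you flag as the ``main obstacle'' is not a matter of care with Phragm\'en--Lindel\"of. The method stops near $0.227$, and no numerical optimisation of $\sigma_1,\eta,r'$ will land on $(0.1892,9.484)$ or $(0.194,7.907)$.

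The missing idea is Turing's method as developed by Amberger \cite{amberger2026estimatingnumberzerosdedekind}, which never evaluates $L$ inside the critical strip. The paper proceeds as follows.

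\begin{enumerate}
\item It packages $\chi$ and $\bar\chi$ into the self-dual function $\Xi_\chi(s)=\xi(s,\chi)\xi(s,\bar\chi)$.
\item It writes $N(T,\chi)=\frac{1}{2\pi}\sum_{\rho}f(\frac12-\beta,T-\gamma;d)+E_1$. Here $f$ is an explicit arctangent kernel, and $E_1$ involves $\Im\log\Xi_\chi$ only at $\frac12+d+iT$ and $\frac12+2d+iT$, both to the right of $\Re s=1$.
\item It majorizes $f$ pointwise, for $|b|\le\frac12$, by a linear combination of real and imaginary parts of $(s_0-\rho)^{-1},(s_0-\rho)^{-2},(s_0-\rho)^{-3}$ with $s_0=\frac12+d+iT$. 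This is the admissibility condition on $(d,a_1,a_2,a_3,a_4)$.
\item Via the Hadamard product, the zero sum collapses to values of $\frac{\Xi_\chi'}{\Xi_\chi}$, $(\frac{\Xi_\chi'}{\Xi_\chi})'$ and $(\frac{\Xi_\chi'}{\Xi_\chi})''$ at $s_0$.
\item These split additively into three parts. The conductor part is computed exactly. The gamma part is handled by Stirling, giving $\mathcal G_u$ and $\mathcal G_\ell$. The Euler-product part is bounded prime by prime through a maximum-principle argument, giving $\mathcal C_E$.
\end{enumerate}

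The leading constant is then $C_1=da_1/4$, fixed by the kernel inequality rather than by any bound for $L$ in the strip. The lower-order constant is $C_2=\max\{\mathcal C_E+\mathcal G_u,\mathcal C_E-\mathcal G_\ell\}$.

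Your reduction $N(T,\chi)=N^+(T,\chi)+N^+(T,\bar\chi)$ and your Stirling evaluation of the main term are fine. Without the kernel-majorant step, however, you would prove an estimate of the same shape with $C_1$ around $0.23$, not the theorem as stated.
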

Table~\ref{tab:furtherArtin} lists further admissible pairs $(C_1,C_2)$,
where $C_1$ is the coefficient of
$\log A(\chi)+m_\chi\log T$ and $C_2$ is the coefficient of $m_\chi$
in the estimate of Theorem~\ref{main-thm}.

\begin{table}[htbp]
\centering
\renewcommand{\arraystretch}{1.2}
\begin{tabular}{|c|c|}
\hline
$C_1$ & $C_2$ \\
\hline
 0.200 & 6.751 \\
\hline
 0.240 & 3.976 \\
\hline
 0.280 & 2.941  \\
\hline
\end{tabular}
\caption{Further admissible pairs \((C_1,C_2)\) in the estimate for \(N(T,\chi)\).}
\label{tab:furtherArtin}
\end{table}

\begin{remark}
 (i)   It is well-known that Artin's conjecture for $L(s,\chi,L/K)$ implies that  $L(s,\chi,L/K)$ belongs to the Selberg class. In addition, we note that general explicit bounds for the number of zeros of $L$-functions in the Selberg class were established by Paloj{\"a}rvi \cite[Corollary 5.3 and Remark 5.4]{Pa2019}, which implies a version of Theorem~\ref{main-thm}, with weaker $(C_1,C_2)$, under Artin's conjecture. 
 
 \noindent (ii) Our argument is a refinement of Amberger's method
\cite{amberger2026estimatingnumberzerosdedekind} on counting zeros of Dedekind zeta functions, which is rooted in Turing's work \cite{Tur53}, and extends it to a broader class of $L$-functions (that are not ``self-dual''). The main new ingredient is the introduction of an additional parameter $a_4$, extending the three-parameter approach used in \cite[Lemma~3.1]{amberger2026estimatingnumberzerosdedekind}. This extra parameter is the main source of the improvements in our final constants, especially the leading constant $C_1$.
\end{remark}

When $\chi$ is the trivial character $\chi_0$, the associated Artin $L$-function is the Dedekind zeta function of $K$, defined as
\begin{equation}\label{def-Dedekind-zeta}
\zeta_K (s) = \prod_{\p} (1-\norm(\p)^{-s}   )^{-1}
\end{equation}
for $\Re(s)>1$. Our argument also leads to the following improvement of the recent work of Amberger \cite{amberger2026estimatingnumberzerosdedekind} (indeed, the second estimate in the theorem below is always sharper than \cite[Thm.\,2.1]{amberger2026estimatingnumberzerosdedekind}).

\begin{theorem}\label{Dedekind:thm} Let $K$ be a number field of degree $n_K$ and absolute discriminant $d_K$.
Let
$$
N_K(T) = \# \{ \rho \in  \Bbb{C}  \mid  \zeta_K(s) =0,\  0  <\beta <  1, \ |\gamma| \leq T\}.
$$
For every $T\ge 1$, the following estimates hold:
\[
\left|
N_K(T)
-
\frac{T}{\pi}
\log\left(
d_K
\left(\frac{T}{2\pi e}\right)^{n_K}
\right)
\right|
\le
0.1892\left(
\log d_K+n_K\log T
\right)
+
9.484\,n_K
+
2.007,
\]
and 
\[
\left|
N_K(T)
-
\frac{T}{\pi}
\log\left(
d_K
\left(\frac{T}{2\pi e}\right)^{n_K}
\right)
\right|
\le
0.194\left(
\log d_K+n_K\log T
\right)
+
7.907\,n_K
+
2.001.
\]
\end{theorem}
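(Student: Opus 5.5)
The plan is to deduce Theorem~\ref{Dedekind:thm} from the argument behind Theorem~\ref{main-thm}, applied to $\chi=\chi_0$, where $L(s,\chi_0,L/K)=\zeta_K(s)$, $A(\chi_0)=d_K$ and $m_{\chi_0}=n_K$. The only differences from the entire case are the simple pole of $\zeta_K$ at $s=1$ and the matching factor in the completed function. So I would work with
\[
\xi_K(s)=s(s-1)\,d_K^{s/2}\,\Gamma_{\mathbb R}(s)^{r_1}\Gamma_{\mathbb C}(s)^{r_2}\zeta_K(s),
\]
which is entire of order one, satisfies $\xi_K(s)=\xi_K(1-s)$, and whose zeros are exactly the non-trivial zeros of $\zeta_K$.

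The steps are as follows.

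\begin{enumerate}
\item Apply the argument principle to $\xi_K$ on the rectangle with vertices $\sigma_1\pm iT$ and $1-\sigma_1\pm iT$, for a suitable $\sigma_1>1$, taking $T$ not the ordinate of a zero; the general case follows by right-continuity.
\item By the functional equation, $\pi N_K(T)$ equals twice the change of $\arg\xi_K$ along the right half of the contour.
\item Split this change into four pieces:
 \begin{itemize}
 \item the conductor term $\tfrac{T}{2}\log d_K$;
 \item the gamma factors, treated by Stirling's formula exactly as for $m_\chi$ archimedean factors, giving the $n_K\tfrac{T}{2}\log\tfrac{T}{2\pi e}$ main term and an $O(n_K)$ explicit error;
 \item the polynomial factor $s(s-1)$;
 \item $\arg\zeta_K(\tfrac12+iT)$, continuously varied from $\sigma_1$.
 \end{itemize}
\item The polynomial factor is the only new contribution. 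Its argument change along the contour is $\arg(\tfrac12+iT)+\arg(-\tfrac12+iT)=\pi$ exactly. In the normalisation where the main term has no $+1$, this leaves a bounded discrepancy of size about $\tfrac1\pi\bigl(\pi-\arctan\tfrac1{2T}\cdot 2\bigr)$ in absolute value after taking the modulus.
\item The remaining parts of this step would need to be checked:
 \begin{itemize}
 \item the pole at $s=1$ affects the Stirling/convexity bookkeeping only through the factor $s(s-1)$;
 \item the non-self-dual handling (the parameter $a_4$) reduces harmlessly since $\chi_0$ is real;
 \item this contributes the additive constant near $2$ (namely $2.007$, respectively $2.001$) rather than a change in $C_1$ or $C_2$.
 \end{itemize}
\end{enumerate}

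The $S(T)$-type term $\tfrac1\pi\arg\zeta_K(\tfrac12+iT)$ is then bounded by the same Turing/Backlund-type device as in the proof of Theorem~\ref{main-thm}. One applies Jensen's formula to
\[
f(s)=\tfrac12\bigl(\zeta_K(s+iT)^{N}+\overline{\zeta_K}(s-iT)^{N}\bigr),
\]
or the corresponding zero-counting for $\Re\,\zeta_K$, on a disc centred at $\sigma_1$. This needs three inputs:
\begin{itemize}
\item a lower bound $|\zeta_K(\sigma_1+iT)|\ge \zeta(\sigma_1)^{-n_K}$-type;
\item convexity (Phragm\'en--Lindel\"of) upper bounds for $|\zeta_K|$ on the disc;
\item the explicit parameters $a_1,\dots,a_4$ optimised as in the main theorem.
\end{itemize}

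The main obstacle is the convexity bound near $s=1$. The pole means $|\zeta_K(s)|$ must be bounded via $|(s-1)\zeta_K(s)|$, which costs an extra factor like $|s+iT-1|$ inside the Jensen disc. For $T\ge1$ this is only $\log(T+O(1))$ divided by the Jensen scaling. I expect it to be absorbable, since $\log T$ already carries the coefficient $C_1 n_K$ with $n_K\ge1$, leaving a small constant. I would first try to show it contributes at most the excess $0.007$ (resp.\ $0.001$) over $2$, checking numerically at $T=1$ where the loss is largest.
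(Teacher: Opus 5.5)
\textbf{Verdict: there is a genuine gap.} Your top-level reduction matches the paper's additive decomposition: you work with $\xi_K(s)=s(s-1)d_K^{s/2}\gamma_K(s)\zeta_K(s)$ and split it into conductor, gamma, polar and $\zeta_K$ pieces. The failure is in how you handle the $\zeta_K$ piece.

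\textbf{Why your route does not reach the stated constants.} You propose the classical Backlund--McCurley route: isolate $\frac{2}{\pi}\arg\zeta_K(\frac12+iT)$ and bound it with Jensen's formula for $\frac12(\zeta_K(s+iT)^N+\overline{\zeta_K}(s-iT)^N)$ plus Phragm\'en--Lindel\"of convexity bounds. That gives an inequality of the right shape. However, it is exactly the method behind the earlier Dedekind results recalled after the theorem (Kadiri--Ng, Trudgian, Hasanalizade--Shen--Wong). There it only produced leading constants $C_1\ge 0.227$, and Trudgian noted his version can be pushed only to about $(\pi\log 4)^{-1}\approx 0.2296$. Nothing in your outline explains how this route would reach $C_1=0.1892$ or $0.194$, or the specific values $C_2=9.484$, $7.907$. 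The parameters $a_1,\dots,a_4$ you plan to ``optimise as in the main theorem'' have no role in a Jensen argument, whose free data are the centre and radii of the discs and the exponent $N$. Theorem~\ref{main-thm} is not proved by a Jensen/convexity device at all.

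\textbf{The missing idea: Turing's averaging, in Amberger's form with the extra cubic parameter.}
\begin{itemize}
\item Integrating the argument in $\sigma$ over $[\frac12,\frac12+d]$, with $d>\frac12$, gives the exact identity $N_K(T)=\frac1\pi\sum_\rho f(\frac12-\beta,T-\gamma;d)+E_1^K(\xi_K;T,d)$.
\item Admissibility of $(d,a_1,a_2,a_3,a_4)$ is a pointwise inequality bounding $f(b,t;d)$, for all $|b|\le\frac12$ and $t\ne0$, by a combination of $R_1,R_2,I_2,I_3$ at $(d\pm b,t)$.
\item Summing this inequality over zeros turns the zero sum into $\Re D_{\xi_K}$, $\Re D'_{\xi_K}$, $\Im D'_{\xi_K}$, $\Im D''_{\xi_K}$ at $s_0=\frac12+d+iT$, which satisfies $\Re s_0>1$.
\end{itemize}

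\textbf{How each factor is then evaluated.} Each factor of $\xi_K$ is evaluated only at $s_0$ and $\frac12+2d+iT$.
\begin{itemize}
\item $d_K^{s/2}$ gives exactly $(\frac{T}{\pi}\pm\frac{da_1}{4})\log d_K$, so $C_1=da_1/4$.
\item The gamma factors give the main term plus the certified constants $\mathcal G_u,\mathcal G_\ell$.
\item The Euler product is bounded by $n_K\mathcal C_E$ through the local maximum-principle estimate.
\item $s(s-1)$ gives the closed-form function of Lemma~\ref{lem:polar-factor-trivial}; its supremum over $T\ge1$ is $C_3=2.007$ (resp.\ $2.001$).
\end{itemize}
No bound for $\zeta_K$ inside the critical strip is ever used. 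So the ``main obstacle'' you identify, the pole near a Jensen disc, does not occur, and the excess of $C_3$ over $2$ is not a convexity loss.

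\textbf{A smaller error in the polar computation.} Along the right half-contour from $\frac12-iT$ to $\frac12+iT$, $\arg(s(s-1))$ increases by $2\pi$. In the classical formulation this factor therefore contributes exactly $2$ to $N_K(T)$, not $1-\frac{2}{\pi}\arctan\frac{1}{2T}$.
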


\begin{remark}
     In \cite{KN12}, extending the arguments of Backlund \cite{Ba18},  McCurley \cite{McC84}, and  Rosser \cite{Ro41} to Dedekind zeta functions, Kadiri and Ng  showed that for $T\ge 1$, one has
\begin{equation}\label{counting-zeros}
 \Big| N_K (T)  -  \frac{T}{\pi} \log \Big( d_K\Big( \frac{T}{2\pi e}\Big) ^{n_K} \Big)  \Big|  \leq  
C_1 (\log d_K + n_K \log T) +C_2  n_K  +C_3,
\end{equation}
with admissible $(C_1,C_2,C_3)= (0.506,16.950, 7.663 )$; also, $C_1$ can be taken as small as $(\pi \log 2)^{-1}\approx 0.459$ at the expense of larger $C_2  n_K  + C_3$. This was improved by Trudgian \cite{Tr15}, who showed that the estimate \eqref{counting-zeros} is valid with $(C_1,C_2,C_3)= (0.316, 5.872, 3.655)$, and  the constant $C_1$ in \eqref{counting-zeros} could be made as small as $0.247$ with  $(C_2 ,C_3) =(8.851,  3.024)$. In \cite[Theorem 1.1]{HSW21-Dedekind}, adapting the techniques developed in \cite{BMOR20}, Hasanalizade-Shen-Wong showed that  for any $T\ge 1$, one has
\begin{equation}\label{main-bound-1}
\Big| N_K (T)  - \frac{T}{\pi} \log \Big( d_K \Big( \frac{T}{2\pi e}\Big)^{n_K}\Big)  + \frac{r_1}{4}\Big|
\le  0.22737  \log\Big(  \frac{d_K(T+2)^{n_K}}{(2\pi)^{n_K}}  \Big)  + 23.02528 n_K  +   4.51954,
\end{equation} 
where $r_1$ is the number of real places of $K$. Furthermore, Amberger \cite{amberger2026estimatingnumberzerosdedekind} improved all the previous results (including the ones in \cite[Table 1]{HSW21-Dedekind}) with admissible $(C_1,C_2,C_3)$ recorded in Table \ref{tab:comparison-amberger}.\footnote{The values of \(C_3\) reported in
\cite[Theorem~2.1 and Table~4]{amberger2026estimatingnumberzerosdedekind}
appear to rely on a sign convention for the \(a_3\)-term that is inconsistent
with the one used earlier in the proof. More precisely, the signs of the
\(a_3\)-terms in equations (3.11) and (3.12) do not appear to be compatible
with the kernel inequality in Lemma~3.1. Consequently, the values of \(C_3\)
listed in Table~4 may require further verification.}

\begin{table}[htbp]
\centering
\renewcommand{\arraystretch}{1.2}
\begin{tabular}{|c|c|c||c|c|c|}
\hline
\multicolumn{3}{|c||}{Amberger \cite{amberger2026estimatingnumberzerosdedekind}}
&
\multicolumn{3}{c|}{Our improvement} \\
\hline
$C_1$ & $C_2$ & $C_3$ &
$C_1$ & $C_2$ & $C_3$ \\
\hline
0.194 & 8.161 & 2.001 & 0.194 & 7.907 & 2.001 \\
\hline
0.200 & 6.803 & 2.001 & 0.200 & 6.751 & 2.015 \\
\hline
0.240 & 4.155 & 2.001 & 0.240 & 3.976 & 2.067 \\
\hline
0.280 & 3.055 & 2.001 & 0.280 & 2.941 & 2.127 \\
\hline
\end{tabular}
\caption{Comparison of explicit constants in the estimates for \(N_K(T)\).}
\label{tab:comparison-amberger}
\end{table}
\end{remark}

An important consequence of Theorem \ref{Dedekind:thm} is the following explicit bound for $N(T)$, the number of non-trivial zeros $\rho$, with $0<\Im(\rho) \le T$, of the Riemann zeta function $\zeta(s)$.

\begin{corollary}\label{improvedzeta}
Let $N(T)= \frac{1}{2} N_{\Bbb{Q}}(T)$. For all $T\ge 1$, the following estimates hold:
\[
\left|
N(T)
-
\frac{T}{2\pi}
\log\left(
\frac{T}{2\pi e}\right)
\right|
\le
0.0946\log T
+
5.746,
\]
and
\[
\left|
N(T)
-
\frac{T}{2\pi}
\log\left(
\frac{T}{2\pi e}\right)
\right|
\le 0.097\log T
+
4.954.
\]
In particular, the first estimate is sharper for $T\ge \exp(330)$. 
\end{corollary}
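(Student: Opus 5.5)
This corollary should follow from Theorem~\ref{Dedekind:thm} with $K=\Bbb{Q}$, so that $n_K=1$ and $d_K=1$.

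First, the symmetry of zeros. Since $\zeta(\bar s)=\overline{\zeta(s)}$, the non-trivial zeros are symmetric about the real axis. No zero of $\zeta$ lies on the segment $0<\beta<1$, $\gamma=0$, because $\zeta(\sigma)<0$ for $0<\sigma<1$. Hence $N_{\Bbb{Q}}(T)=2N(T)$ exactly, which justifies the definition $N(T)=\frac12 N_{\Bbb{Q}}(T)$ in the statement.

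Next, specialise the two estimates. With $d_{\Bbb{Q}}=1$ and $n_{\Bbb{Q}}=1$, the main term becomes $\frac{T}{\pi}\log\frac{T}{2\pi e}$. The first estimate of Theorem~\ref{Dedekind:thm} gives
\[
\Bigl|2N(T)-\tfrac{T}{\pi}\log\tfrac{T}{2\pi e}\Bigr|\le 0.1892\log T+9.484+2.007 .
\]
Dividing by $2$ yields $0.0946\log T+5.7455$, which rounds up to $5.746$. The second estimate gives $(0.194\log T+7.907+2.001)/2=0.097\log T+4.954$. Both claimed inequalities follow immediately.

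Finally, compare the two bounds. The first is sharper exactly when
\[
0.0946\log T+5.746<0.097\log T+4.954,
\]
that is, when $0.0024\log T>0.792$, i.e.\ $\log T>330$. This gives the stated threshold $T\ge \exp(330)$.

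There is no real obstacle here beyond the rounding and the exact factor of $2$. All the substantive work lies in Theorem~\ref{Dedekind:thm}, which we may assume. The only point to take care over is that the constant $C_3$ from that theorem is carried through, since it contributes the $2.007/2$ and $2.001/2$ in the constants above.
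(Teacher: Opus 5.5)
Your proposal is correct and takes the same route as the paper: specialise Theorem~\ref{Dedekind:thm} to $K=\Bbb{Q}$ (so $d_K=n_K=1$), keep the $C_3$ term, halve, and round $11.491/2=5.7455$ up to $5.746$ and $9.908/2=4.954$. One small quibble, which concerns the statement's wording rather than your argument: at $\log T=330$ the two bounds coincide exactly, so, as your own computation shows, the first estimate is strictly sharper only for $T>\exp(330)$.
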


\begin{remark}
The study of $N(T)$ has a long and rich history. For the convenience of the reader, writing 
\begin{equation}\label{his-bound}
\left| N (T)  - \frac{T}{   2 \pi} \log \left( \frac{T}{2\pi e}\right)  \right|
 \le   C_1 \log T  + C_2   \log\log T  + C_3,
\end{equation}
for $T\ge T_0$, we summarise the advances that have been made in Table \ref{tableriemann} below.
\begin{table}[htbp] 
\centering
\begin{tabular}{ |c||c|c|c|c|   } 
 \hline  
  &  $C_1$ & $C_2$ & $C_3$ & $T_0$ \\ 
 \hline
 von Mangoldt \cite{vMo05} (1905) & 0.4320   & 1.9167  &  13.0788  &  28.5580     \\ 
 \hline
 Grossmann \cite{Gr13} (1913) & 0.2907   & 1.7862 &  7.0120 &  50  \\ 
 \hline 
 Backlund \cite{Ba18} (1918)  & 0.1370   & 0.4430  & 5.2250 & 200 \\ 
 \hline 
 Rosser \cite{Ro41} (1941)  & 0.1370   & 0.4430  & 2.4630 & 2\\ 
 \hline  
 Trudgian \cite{Trudgian20121053} (2012) &  0.1700& 0& 2.8730 & $e$\\
 \hline
 Trudgian \cite{Tr14-2} (2014)   & 0.1120   & 0.2780  & 3.3850 & $e$\\ 
 \hline 
 Platt--Trudgian \cite{PLATT2015842} (2015) & 0.1100 & 0.2900 & 3.165 & $e$\\
 \hline
Hasanalizade, Shen, and Wong \cite{HASANALIZADE2022219} (2022)   & 0.1038   & 0.2573  & 9.4925 & $e$\\ 
 \hline  
 Bellotti--Wong \cite{BeWo25} (2025) & 0.10076 & 0.24460 & 8.08344 & $e$\\
 \hline
  Amberger \cite{amberger2026estimatingnumberzerosdedekind} (2025) & 0.097 & 0 & 5.081 & 1 \\
 \hline
\end{tabular}
   \caption{Previous explicit bounds for $N(T)$ in \eqref{his-bound}}\label{tableriemann} 
\end{table}
\end{remark}

Now, let $\psi$ be a Hecke character (of finite order), and let $L(s,\psi)$ and $\mathfrak{f}(\psi)$ be its associated Hecke $L$-function and conductor, respectively. By class field theory, there is a Galois extension $L/K$ with a character $\chi\in\Gal(L/K)$ of degree 1 (i.e. $\chi(1)=1$) such that $L(s,\chi,L/K)= L(s,\psi)$ and $\mathfrak{f}(\chi)= \mathfrak{f}(\psi)$. (See \cite[Ch. VII, \textsection 10]{Neu99} for more details.) From this correspondence, applying Theorem \ref{main-thm}, we then derive the following zero-counting result for Hecke $L$-functions.

\begin{theorem}\label{th:dedekind}
Let $K$ be a number field of degree $n_K$ and absolute discriminant $d_K$. Let $\psi$ be a non-trivial Hecke character (of finite order), and let $\mathfrak{f}(\psi)$ be its conductor. Define
\[
N(T,\psi) = \# \{ \rho \in  \Bbb{C}  \mid  L(\rho,\psi) =0,\  0  <\beta <  1, \ |\gamma| \leq T\}.
\]
For every $T\ge 1$, the following estimates hold:
\[
\left|
N(T,\psi)
-
\frac{T}{\pi}
\log\left(
d_K\mathrm N(\mathfrak{f}(\psi))
\left(\frac{T}{2\pi e}\right)^{n_K}
\right)
\right|
\le
0.1892\left(
\log (d_K\mathrm N\mathfrak{f}(\psi))+ n_K\log T
\right)
+
9.484\, n_K
\]
and
\[
\left|
N(T,\psi)
-
\frac{T}{\pi}
\log\left(
d_K\mathrm N(\mathfrak{f}(\psi))
\left(\frac{T}{2\pi e}\right)^{n_K}
\right)
\right|
\le
0.194\left(
\log (d_K\mathrm N\mathfrak{f}(\psi))+ n_K\log T
\right)
+
7.907 n_K.
\]
 \end{theorem}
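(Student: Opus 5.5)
The statement to prove is Theorem~\ref{th:dedekind}, and I would deduce it directly from Theorem~\ref{main-thm} through class field theory, as the paragraph before the statement suggests.

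First I construct the Galois extension. Given a non-trivial Hecke character $\psi$ of finite order on $K$ with conductor $\mathfrak f(\psi)$, let $L$ be the ray class field of $K$ modulo $\mathfrak f(\psi)$, including the relevant infinite places. Artin reciprocity identifies the ray class group with $\Gal(L/K)$, so $\psi$ becomes a one-dimensional character $\chi$ of $\Gal(L/K)$. It satisfies $\chi(1)=1$, is non-trivial because $\psi$ is, and is irreducible since it is one-dimensional.

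Next I check that the $L$-functions and conductors agree, citing \cite[Ch.~VII, \textsection 10]{Neu99}. For $\p\nmid \mathfrak f(\psi)$, the Frobenius $\sigma_\P$ corresponds to the class of $\p$, so the Euler factors $(1-\chi(\sigma_\P)\norm(\p)^{-s})^{-1}$ and $(1-\psi(\p)\norm(\p)^{-s})^{-1}$ coincide. For $\p\mid\mathfrak f(\psi)$, primitivity of $\psi$ means inertia acts non-trivially. Hence $V^{I_\P}=0$, the Artin factor is $1$, and this matches $\psi(\p)=0$. The Artin conductor of a one-dimensional character equals the Hecke conductor, so $\mathfrak f(\chi)=\mathfrak f(\psi)$. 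Consequently $L(s,\chi,L/K)=L(s,\psi)$, and the two zero sets, with multiplicities, are identical; that is, $N(T,\chi)=N(T,\psi)$.

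Now I apply Theorem~\ref{main-thm}. Artin's conjecture holds here because $L(s,\psi)$ is entire for non-trivial $\psi$ (Hecke), so the hypothesis is unconditional. Since $\chi(1)=1$, we have $A(\chi)=d_K\,\norm(\mathfrak f(\psi))$ and $m_\chi=n_K$. Substituting these into the two inequalities of Theorem~\ref{main-thm} gives exactly the two stated bounds.

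The only step needing care is the identification of the Euler factors at ramified primes and the equality of conductors. This relies on $\psi$ being primitive, which is built into the definition of the conductor, and on the standard fact that the Artin and Hecke conductors agree for abelian characters.
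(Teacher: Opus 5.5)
Your proposal is correct and follows the paper's own route. The paper also uses class field theory (citing Neukirch, Ch.~VII, \textsection 10) to realise $\psi$ as a one-dimensional character $\chi$ of $\Gal(L/K)$ with $L(s,\chi,L/K)=L(s,\psi)$ and $\mathfrak f(\chi)=\mathfrak f(\psi)$, then applies Theorem~\ref{main-thm} with $A(\chi)=d_K\norm(\mathfrak f(\psi))$ and $m_\chi=n_K$, Artin's conjecture being known when $\chi(1)=1$; your checks of the ramified Euler factors and of the equality of conductors spell out what the paper leaves to the reference.
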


\begin{remark}
Previously, for any fixed totally imaginary field $K$, adapting Trudgian's argument in \cite{Tr15}, Grze{\'s}kowiak \cite[Theorem 1.1]{Gr17} proved a version of Theorem \ref{th:dedekind}, where the leading term in the bound is
$C_1\left(
 \frac{n_K}{2}\log (d_K\mathrm N\mathfrak{f}(\psi))+ n_K\log T
\right)$, with $C_1\ge (\pi \log 4)^{-1} \approx 0.229612$.
\end{remark}

When $K=\Bbb{Q}$, this theorem yields the following zero-counting result for Dirichlet $L$-functions.

\begin{corollary}\label{coro:dedekind}
Let $\chi$ be a non-trivial  Dirichlet character modulo $q$, and let $L(s,\chi)$ be the Dirichlet $L$-function of $\chi$. Set
$$
N_\chi(T) = \# \{ \rho \in  \Bbb{C}  \mid  L(\rho,\chi) =0,\  0  <\beta <  1, \ |\gamma| \leq T\}.
$$
For every $T\ge 1$, the following estimates hold:
\[
\left|
N_{\chi}(T)
-
\frac{T}{\pi}
\log\left(
\frac{qT}{2\pi e}
\right)
\right|
\le
0.1892
\log (q T)
+
9.484,
\]
and
\[
\left|
N_{\chi}(T)
-
\frac{T}{\pi}
\log\left(
\frac{qT}{2\pi e}
\right)
\right|
\le
0.194
\log (q T)
+
7.907. 
\]
\end{corollary}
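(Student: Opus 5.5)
This is a direct specialisation of Theorem~\ref{th:dedekind} to $K=\Bbb{Q}$, so the plan is to substitute the invariants of $\Bbb{Q}$ into both estimates there.

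First, take $K=\Bbb{Q}$, so that $n_K=1$ and $d_K=1$. Every Dirichlet character is imprimitive or primitive, and I will reduce to the primitive case.

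\begin{itemize}
\item \textbf{Primitive $\chi$.} Let $\chi$ be primitive of conductor $q$. Under the class field theory correspondence recalled before Theorem~\ref{th:dedekind}, $\chi$ corresponds to a finite-order Hecke character $\psi$ of $\Bbb{Q}$ with $L(s,\psi)=L(s,\chi)$ and $\mathrm N(\mathfrak f(\psi))=q$. The conductor ideal is $(q)$, and $\psi$ is ramified at the infinite place exactly when $\chi$ is odd; this affects only the gamma factor, not the norm of the finite conductor.
\item \textbf{Imprimitive $\chi$.} If $\chi$ is induced by a primitive $\chi^*$ modulo $q^*\mid q$, then $L(s,\chi)=L(s,\chi^*)\prod_{p\mid q}(1-\chi^*(p)p^{-s})$. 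The Euler factors vanish only on $\Re(s)=0$, so they contribute no zeros with $0<\beta<1$. Hence $N_\chi(T)=N_{\chi^*}(T)$.
\end{itemize}

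Second, substitute into Theorem~\ref{th:dedekind}. The main term becomes $\frac{T}{\pi}\log\left(\frac{qT}{2\pi e}\right)$ and the error term becomes $C_1\log(qT)+C_2$, giving both pairs $(C_1,C_2)=(0.1892,9.484)$ and $(0.194,7.907)$ as stated.

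The only subtle point is the imprimitive case, which I take to be the main obstacle. Applied to $\chi^*$, the theorem gives the main term with $q^*$ rather than $q$. The difference is $\frac{T}{\pi}\log(q/q^*)$, which grows with $T$ and cannot be absorbed into the error term. So a literal application yields the stated formula only for primitive $\chi$.

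I would therefore read $q$ as the conductor of $\chi$, which is the standard convention, equivalently restricting to primitive characters. With that reading, the corollary is an immediate substitution and needs no further estimates.
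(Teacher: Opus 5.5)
Your proposal is correct and is exactly the paper's argument: the corollary is Theorem~\ref{th:dedekind} specialised to $K=\mathbb{Q}$, with $d_K=n_K=1$ and $\mathrm{N}(\mathfrak{f}(\psi))=q$. Your caveat is also correct and worth stating explicitly: $N_\chi(T)=N_{\chi^*}(T)$, while the two main terms differ by $\frac{T}{\pi}\log(q/q^*)$, so the estimate as literally written fails for large $T$ when $\chi$ is imprimitive (e.g.\ $\chi$ modulo $12$ induced from the non-trivial character modulo $4$); hence $q$ must be the conductor of $\chi$, which is what the paper's one-line derivation tacitly assumes.
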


\begin{remark}
(i) We remark that Theorem~\ref{th:dedekind} and Corollary~\ref{coro:dedekind} remain valid with
$(0.1892,9.484)$ and $(0.194,7.907)$ replaced by  any of the admissible pairs $(C_1,C_2)$ listed in
Table~\ref{tab:furtherArtin}.

\noindent (ii) The explicit bounds for $N_\chi(T)$ have been established by McCurley \cite{McC84}, Trudgian \cite{Tr15}, and then Bennett-Martin-O’Bryant-Rechnitzer \cite{BMOR20}. McCurley and Trudgian obtained bounds of the form
\[
\left|
N_{\chi}(T)
-
\frac{T}{\pi}
\log\left(
\frac{qT}{2\pi e}
\right)
\right|
\le
C_1
\log (q T)
+
C_2
\]
for positive constants $C_1$ and $C_2$. In \cite[Theorem 2.1]{McC84}, McCurley gave a general formulation for $C_1=C_1(\eta)$ and $C_2=C_2(\eta)$, as functions of a parameter $\eta\in (0,\frac{1}{2}]$, with all $C_1 > 1/\pi \log 2 > 0.45$. In \cite[Theorem 1]{Tr15}, Trudgian 
refined McCurley’s techniques and gave ten pairs of values $(C_1, C_2)$ with all $C_1\ge 0.247$; in his proof, it was asserted
that $C_1$ could be taken as small as $(\pi \log 4)^{-1} 
\approx 0.229612$. In \cite[Theorem 1.1]{BMOR20}, a slightly more complicated bound
\[
\left|
N_{\chi}(T)
- \left(
\frac{T}{\pi}
\log\left(
\frac{qT}{2\pi e}
\right) -\frac{\chi(-1)}{4}\right)
\right|
\le
0.22737 \log \frac{q(T+2)}{2\pi} + 2\log\left(1+\log \frac{q(T+2)}{2\pi}\right) - 0.5
\]
was established for $T \ge 5/7$ with  $\log \frac{q(T+2)}{2\pi}> 1.567$. As a direct consequence, in \cite[Corollary 1.2]{BMOR20}  Bennett-Martin-O’Bryant-Rechnitzer showed that $(C_1,C_2)$ can be taken as $(0.247, 6.894)$ and $(0.298 ,4.358)$, for $T \ge 5/7$, which improve all results of Trudgian as well as the parametric bound of McCurley. 
Moreover, it is worth remarking that from our main theorem, one can even take $(C_1,C_2)=(0.240, 3.976)$ for $T\ge 1$.
\end{remark}

\section{The main term and the gamma factor}\label{main}

\subsection{The main term}

For a number field $F$, we let  $n_F=[F:\Bbb{Q}]$ and $d_F$ denote its degree and absolute discriminant, respectively. Let $L/K$ be a Galois extension of number fields with Galois group $G$. In this section, we shall collect some preliminaries for Artin $L$-functions (mainly from \cite{martinet1977} and \cite[Ch. VII]{Neu99}). For any irreducible character $\chi$ of $G$, we let $L(s,\chi, L/K)$  be the Artin $L$-functions attached to $\chi$ defined in \eqref{def:artinfunction}. For $1\le j \le \dim V^{I_{\mathfrak{P}}}$, let $\lambda_{j,\p}$ be eigenvalues of $\rho|^{V^{I_{\mathfrak{P}}}}(\sigma_{\mathfrak{P}})$, and we set $\lambda_{j,\p}=0$ for $ \dim V^{I_{\mathfrak{P}}}< j\le  \dim V =\chi(1)$. Then we have
\begin{equation}\label{L-prod}
L(s,\chi, L/K)=\prod_{\p} \det(I -\rho|^{V^{I_{\mathfrak{P}}}}(\sigma_{\mathfrak{P}})\norm(\p)^{-s}   )^{-1}
= \prod_{\p}\prod_{1\le j\le \chi(1)} (1- \lambda_{j,\p} \norm(\p)^{-s}   )^{-1}
\end{equation}
for $\Re(s)>1$. Consequently, we have the following result.
\begin{lemma} \label{bd-Artin-L}
For $s=\sigma+it$ with $\sigma>1$, one has
\begin{align*}
\left(\frac{\zeta_K (2 \sigma)}{\zeta_K (\sigma)} \right)^{\chi(1)} \leq |L(s,\chi, L/K)|  
\le \zeta_K (\sigma)^{\chi(1)}
\leq \zeta (\sigma)^{n_K \chi(1)},
\end{align*}
where, as usual, $\zeta(s)$ denotes the Riemann zeta function. In particular, when $\chi$ is the trivial character of $G=\Gal(L/K)$, then
\begin{align*}
\frac{\zeta_K (2 \sigma)}{\zeta_K (\sigma)}  \leq |\zeta_K (s)| \leq \zeta (\sigma)^{n_K}
\end{align*}
for $s=\sigma+it$ with $\sigma>1$.
\end{lemma}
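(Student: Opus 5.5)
The plan is to argue prime by prime from the Euler product \eqref{L-prod}, using only that the eigenvalues $\lambda_{j,\p}$ have modulus at most $1$. Each $\lambda_{j,\p}$ is either an eigenvalue of the finite-order operator $\rho|^{V^{I_{\mathfrak{P}}}}(\sigma_{\mathfrak{P}})$, hence a root of unity, or equals $0$ by convention. For $\sigma>1$ the product converges absolutely, so
\[
|L(s,\chi,L/K)|=\prod_{\p}\prod_{j=1}^{\chi(1)}|1-\lambda_{j,\p}\norm(\p)^{-s}|^{-1},
\]
and it suffices to bound each local factor from above and below.

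For the upper bound I use $|1-\lambda\norm(\p)^{-s}|\ge 1-\norm(\p)^{-\sigma}$ whenever $|\lambda|\le 1$. Hence each factor is at most $(1-\norm(\p)^{-\sigma})^{-1}$, and taking the product over $\p$ and over the $\chi(1)$ indices gives $|L(s,\chi,L/K)|\le\zeta_K(\sigma)^{\chi(1)}$. For the lower bound I use $|1-\lambda\norm(\p)^{-s}|\le 1+\norm(\p)^{-\sigma}=(1-\norm(\p)^{-2\sigma})/(1-\norm(\p)^{-\sigma})$. So each factor is at least $(1-\norm(\p)^{-\sigma})/(1-\norm(\p)^{-2\sigma})$, and the full product is bounded below by $(\zeta_K(2\sigma)/\zeta_K(\sigma))^{\chi(1)}$. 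When $\lambda=0$ the factor equals $1$, which satisfies both bounds trivially, so no separate treatment is needed.

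The last inequality $\zeta_K(\sigma)\le\zeta(\sigma)^{n_K}$ follows by grouping the primes $\p$ of $K$ above each rational prime $p$. Write $\norm(\p)=p^{f_\p}$ with $f_\p\ge1$ and $\sum_{\p\mid p}f_\p\le\sum_{\p\mid p}e_\p f_\p=n_K$. Then
\[
\prod_{\p\mid p}(1-p^{-f_\p\sigma})^{-1}\le\prod_{\p\mid p}(1-p^{-\sigma})^{-1}\le(1-p^{-\sigma})^{-n_K},
\]
where the first step uses $p^{-f_\p\sigma}\le p^{-\sigma}$. The second step holds because the number of primes above $p$ is at most $\sum_{\p\mid p} f_\p\le n_K$ and each factor $(1-p^{-\sigma})^{-1}$ is at least $1$. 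Taking the product over $p$ gives the claim.

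The trivial-character case is $\chi(1)=1$, where $L(s,\chi_0,L/K)=\zeta_K(s)$ and the bounds specialise directly. The only point needing care is the justification that $|\lambda_{j,\p}|\le1$, which comes from the finiteness of $G$; the rest is routine.
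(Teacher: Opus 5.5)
Your proof is correct and follows the paper's argument: use $|\lambda_{j,\p}|\le 1$ to bound each local factor $|1-\lambda_{j,\p}\norm(\p)^{-s}|^{-1}$ between $(1+\norm(\p)^{-\sigma})^{-1}$ and $(1-\norm(\p)^{-\sigma})^{-1}$, then multiply over $j$ and over $\p$. You also fill in two points the paper leaves implicit: why the eigenvalues are roots of unity or zero, and the grouping-by-$p$ argument for $\zeta_K(\sigma)\le\zeta(\sigma)^{n_K}$.
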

\begin{proof}
   Observe that
    $$
    \Big|\prod_{1\le j\le \chi(1)} (1- \lambda_{j,\p} \norm(\p)^{-s}   )^{-1}\Big|
   \le \prod_{1\le j\le \chi(1)} (1- \norm(\p)^{-\sigma}   )^{-1} \le  ((1- \norm(\p)^{-\sigma}   )^{-1})^{\chi(1)}
    $$
    and 
\begin{align*}
   \Big|\prod_{1\le j\le \chi(1)} (1- \lambda_{j,\p} \norm(\p)^{-s}   )^{-1}\Big|
   &\ge \prod_{1\le j\le \chi(1)} (1+ \norm(\p)^{-\sigma}   )^{-1} 
  \\& \ge  ((1+ \norm(\p)^{-\sigma}   )^{-1})^{\chi(1)}
   = \left(\frac{(1- \norm(\p)^{-2\sigma}   )^{-1}}{(1- \norm(\p)^{-\sigma}   )^{-1}} \right)^{\chi(1)} .
 \end{align*}
We then conclude the proof by recalling \eqref{L-prod} and the definition of $\zeta_K (s)$ in \eqref{def-Dedekind-zeta}.
\end{proof}
Furthermore, following \cite[pp. 10--11]{martinet1977}, for any prime $\mathfrak{P}$ of $L$ that is above $\p$, we define
$$
\chi(\sigma_{\mathfrak{P}}^m)
=\frac{1}{e} \sum_{g \mapsto \sigma_{\mathfrak{P}}^m} \chi(g),
$$
where $e$ is the ramification index of $\mathfrak{P}$, and the sum is over the $e$ elements $g\in D_\mathfrak{P}$ that map onto $\sigma_{\mathfrak{P}}^m \in D_\mathfrak{P}/I_\mathfrak{P}$. (Note that
the order of the inertia group $I_\mathfrak{P}$ equals $e$, so the sum ``takes an average".) One has 
$$
\log L(s,\chi,L/K)
=\sum_{\p}\sum_{m=1}^\infty \frac{\chi(\sigma_{\mathfrak{P}}^m)}{m\norm(\p)^{ms}},
$$
where, for $\Re (s)>1$, we take the branch defined by the absolutely convergent Dirichlet series. Consequently, by the fact that $|\chi(\sigma_{\mathfrak{P}}^m)
|\le \chi(1)$, we then have the following handy corollary.

\begin{corollary}\label{cor:artin-log-derivative-bounds}
Let $s=\sigma+i t$ with $\sigma>1$. Then
\[
|\log L(s,\chi,L/K)|
\le  \chi(1)\log \zeta_K (\sigma) \le
n_K\chi(1)\log\zeta(\sigma),
\]
where the branch of the logarithm is defined by the absolutely convergent
Euler product. Moreover, for every integer $k\ge 0$, one has
\[
\left|
\frac{d^k}{ds^k}
\frac{L'}{L}(s,\chi,L/K)
\right|
\le
n_K\chi(1)
(-1)^{k+1}
\frac{d^k}{d\sigma^k}
\left(\frac{\zeta'}{\zeta}\right)(\sigma),
\]
where the right-hand side is positive for $\sigma>1$. The same estimates
hold with $\chi$ replaced by $\bar\chi$.
\end{corollary}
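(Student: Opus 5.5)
The plan is to work directly from the Dirichlet series for $\log L(s,\chi,L/K)$ displayed just before the corollary, and compare it termwise with the Dedekind zeta function $\zeta_K$ and then with $\zeta$. For $\sigma>1$ everything converges absolutely, so the termwise comparisons are legitimate.

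\emph{First inequality.} Apply the triangle inequality to
\[
\log L(s,\chi,L/K)=\sum_{\p}\sum_{m\ge1}\chi(\sigma_{\P}^m)\,m^{-1}\norm(\p)^{-ms},
\]
using $|\chi(\sigma_{\P}^m)|\le\chi(1)$. This holds because the quantity is an average of values $\chi(g)$, each of absolute value at most $\chi(1)$. It gives $|\log L|\le\chi(1)\sum_{\p,m}m^{-1}\norm(\p)^{-m\sigma}$, and the right-hand side equals $\chi(1)\log\zeta_K(\sigma)$ by the Euler product \eqref{def-Dedekind-zeta}.

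\emph{Passing from $\zeta_K$ to $\zeta$.} For each rational prime $p$, the primes $\p\mid p$ satisfy $\norm(\p)=p^{f_\p}$ with $\sum_{\p\mid p}f_\p\le n_K$. Since $p^{-mf\sigma}\le p^{-m\sigma}$, each $\p$ contributes at most $\sum_m m^{-1}p^{-m\sigma}$ per unit of $f_\p$ (one can simply use $1\le f_\p$, and at most $n_K$ primes lie over $p$). Summing over $\p\mid p$ therefore gives at most $n_K\sum_m m^{-1}p^{-m\sigma}$, and summing over $p$ gives $n_K\log\zeta(\sigma)$. This settles the first claim.

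\emph{Derivative bounds.} Differentiate the log series; for $\sigma>1$ this is justified by locally uniform absolute convergence. We get
\[
\frac{L'}{L}(s)=-\sum_{\p,m}\chi(\sigma_{\P}^m)\log\norm(\p)\,\norm(\p)^{-ms},
\]
and hence
\[
\frac{d^k}{ds^k}\frac{L'}{L}=(-1)^{k+1}\sum_{\p,m}\chi(\sigma_{\P}^m)(m\log\norm(\p))^k\log\norm(\p)\,\norm(\p)^{-ms}.
\]
Taking absolute values bounds this by
\[
\chi(1)\sum_{\p,m}(\log\norm(\p))^{k+1}m^k\norm(\p)^{-m\sigma}=\chi(1)(-1)^{k+1}\frac{d^k}{d\sigma^k}\frac{\zeta_K'}{\zeta_K}(\sigma).
\]
It remains to compare this with $n_K$ times the corresponding sum for $\zeta$. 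Write the $\zeta_K$ sum as a sum over prime powers $\mathfrak q=\p^m$ of $\Lambda_K(\mathfrak q)(\log\norm\mathfrak q)^k\norm\mathfrak q^{-\sigma}$. Group these by rational prime powers $N=p^j$: for fixed $p$ and $j$, the pairs $(\p,m)$ with $\norm(\p)^m=p^j$ contribute $\sum_{\p\mid p,\,f_\p\mid j}f_\p\log p\,(j\log p)^k p^{-j\sigma}\le n_K\Lambda(p^j)(\log p^j)^kp^{-j\sigma}$, using $\sum f_\p\le n_K$. Summing over $p^j$ yields $n_K(-1)^{k+1}(\zeta'/\zeta)^{(k)}(\sigma)$, which is manifestly a positive series. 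This grouping is the one step needing care, since the naive bound $\norm(\p)\ge p$ fails to respect the weight $\log\norm(\p)$. The same grouping, with weights $1/m$ replaced by $f_\p/j$, also gives the cleaner version of the $\log\zeta$ comparison above.

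\emph{The conjugate character.} Finally, $\bar\chi$ is the character of the contragredient representation and also satisfies $|\bar\chi(\sigma_{\P}^m)|\le\bar\chi(1)=\chi(1)$, so the identical argument applies to it. The only genuine obstacle is the bookkeeping in the grouping step; everything else is the triangle inequality.
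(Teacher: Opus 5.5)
Your proposal is correct and follows the same route as the paper, which justifies the corollary by the triangle inequality applied to the Dirichlet series for $\log L(s,\chi,L/K)$ and its $s$-derivatives, using $|\chi(\sigma_{\P}^m)|\le\chi(1)$. Your grouping of terms by rational prime powers $p^j$ with $\sum_{\p\mid p}f_{\p}\le n_K$ supplies the $\zeta_K$-to-$\zeta$ comparison for the derivative sums, which the paper leaves implicit, and it rightly avoids the naive termwise comparison, which can fail there.
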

\begin{remark}
The estimates in Lemma~\ref{bd-Artin-L} and
Corollary~\ref{cor:artin-log-derivative-bounds} are deliberately uniform in
the extension \(L/K\) and the character \(\chi\). In special situations, one
may be able to obtain sharper bounds by using more detailed information
about \(L/K\) or \(\chi\), or by exploiting the pairing between
\(L(s,\chi)\) and \(L(s,\bar\chi)\). This may be particularly relevant for
special classes of characters, such as quadratic or cubic characters. 
However, we do not pursue such refinements here, since the presented uniform estimates are simpler to state and apply, and are sufficient for the general bounds proved in this paper.
\end{remark}
Let $\mathfrak{f}(\chi)$ denote the Artin conductor of $\chi$, and set $A(\chi)= d_K^{\chi(1)} \norm(\mathfrak{f}(\chi))$. In addition, we set $\delta(\chi)=1$ if $\chi$ is the trivial character; otherwise $\delta(\chi)=0$. The completed function $\xi (s,\chi) =\xi(s,\chi, L/K)$ is defined by
\begin{equation}\label{def-xi-chi}
\xi (s,\chi)= (s(s-1))^{\delta(\chi)} A(\chi)^{s/2} \gamma_\chi (s) L (s,\chi, L/K), 
\end{equation}
where
\begin{equation}\label{def:gammachi}
    \gamma_\chi (s) = 
\Big( \pi^{-\frac{s}{2}} \Gamma \Big( \frac{s}{2}\Big)\Big)^{ c_0(\chi)}
\Big(\pi^{-\frac{s+1}{2}} \Gamma \Big( \frac{s+1}{2}\Big) \Big)^{c_1(\chi)} ,
\end{equation}
with $c_0(\chi),c_1(\chi) \in \Bbb{Z}_{\ge 0}$ such that
$$
c_0(\chi)+c_1(\chi)  = n_K \chi(1).
$$
By the works of Artin and Brauer (see, e.g., \cite[Ch. VII, \textsection 12]{Neu99}),  it is known that $\xi(s,\chi)$ satisfies the functional equation
\begin{equation}\label{FE-Artin}
 \xi (s,\chi) = W(\chi) \xi (1-s,\bar{\chi}) ,
\end{equation}
where $W(\chi)$ is the global root number of $L(s,\chi, L/K)$ such that $|W(\chi)|=1$. In addition, it is known that $A(\bar{\chi}) = A(\chi)$ and $\gamma_{\bar{\chi}}=\gamma_\chi$. Consequently, one has
\begin{equation}\label{FE-Artin-dual}
\xi (s,\chi)= \overline{\xi (\bar{s},\bar{\chi})}.
\end{equation}

\begin{remark}
Let $r_1$ and $r_2$ be the numbers of real and complex places, respectively, of $K$; note that $n_K = r_1 +2r_2$. 
For Dedekind zeta functions, namely, $\chi=\chi_0$ is the trivial character of $G=\Gal(L/K)$, we then have the completed zeta function $\xi_K(s) $ defined as
\begin{equation}\label{def-xi}
\xi_K(s)= s(s-1) d_K^{s/2} \gamma_K (s) \zeta_K (s),
\end{equation}
where 
$$
\gamma_K (s) = \Big(\pi^{-\frac{s+1}{2}} \Gamma \Big( \frac{s+1}{2}\Big) \Big)^{r_2} \Big( \pi^{-\frac{s}{2}} \Gamma \Big( \frac{s}{2}\Big)\Big)^{r_1 + r_2}.
$$
We recall that $\xi_K(s)$ extends to an entire function of order 1 and satisfies the functional equation
\begin{equation}\label{FE}
\xi_K(s) = \xi_K (1-s).
\end{equation}
\end{remark}
From now on, unless otherwise stated, we first assume Artin's 
conjecture for $L(s,\chi,L/K)$ attached to a non-trivial irreducible character $\chi$. Note that for such an instance, the functional equation \eqref{FE-Artin} implies Artin's conjecture for $L(s,\bar{\chi},L/K)$. Hence, $\xi(s,\chi)$ and $\xi(s,\bar{\chi})$ are entire functions of order one. The case of trivial character, for which $L(s,\chi,L/K)=\zeta_K(s)$, will be treated separately by using the completed Dedekind zeta-function $\xi_K(s)$.

Moreover, we also assume initially that $T$ is not the ordinate of a zero of either
$L(s,\chi,L/K)$ or $L(s,\bar{\chi},L/K)$. The general case follows by the usual limiting argument (see, e.g., \cite[p. 279,  Footnote  1]{HSW21-Dedekind}). Denote
\[ L_\chi(s):=L(s,\chi,L/K)\quad\text{and}\quad L_{\bar{\chi}}(s):=L(s,\bar{\chi},L/K).
\]
For the zero-counting argument, we will use the auxiliary symmetrized completed function
\[
\Xi_\chi(s):=\xi(s,\chi)\xi(s,\bar{\chi}).
\]
This is not a new completion of $L_\chi(s)$ but a useful function for
packaging the pair $\chi,\bar{\chi}$, which will ease the notation throughout the proof. Since
\[
A(\bar{\chi})=A(\chi),\qquad
\gamma_{\bar{\chi}}(s)=\gamma_\chi(s),\qquad
W(\chi)W(\bar{\chi})=1,
\]
we have
\begin{equation}\label{functeqXi}
     \Xi_\chi(s)=\Xi_\chi(1-s)=\overline{\Xi_\chi(\bar{s})}.
\end{equation}
Thus, $\Xi_\chi$ admits the same formal symmetry as completed Dedekind
zeta functions. Its zeros are the multiset union of the zeros of
$L_\chi(s)$ and $L_{\bar{\chi}}(s)$. Since complex conjugation gives a
multiplicity-preserving bijection between the zeros of $L_\chi(s)$ and
those of $L_{\bar{\chi}}(s)$, this multiset has twice as many zeros as
$L_\chi(s)$ alone. More precisely, let $\mathcal{Z}_\chi$ denote the multiset union of the
non-trivial zeros of $L_\chi(s)$ and $L_{\bar{\chi}}(s)$. If
$\rho=\beta+i\gamma\in\mathcal{Z}_\chi$, then
$1-\bar{\rho}=1-\beta+i\gamma$ also belongs to $\mathcal{Z}_\chi$, with the
same multiplicity.\\
Although $\Xi_\chi$ has twice as many zeros as $L_\chi$, the operators
defined below are normalized so as to count $N(T,\chi)$, not the full zero
count $N_{\Xi_\chi}(T)$. Equivalently, they are one half of the operators
one would obtain by applying the Dedekind-zeta argument directly to the function $\Xi_\chi$.\\
By the argument principle applied to $\xi(s,\chi)$, for any $d>\frac{1}{2}$, we have \[
N(T,\chi)=\frac{1}{2\pi i} \oint_{\mathcal{R}} \frac{\xi' (s,\chi)}{\xi (s,\chi)} ds,
\]
where $\mathcal{R}$ is the rectangular contour with vertices
$$
\frac{1}{2}-d-i T, \quad \frac{1}{2}+d-i T, \quad \frac{1}{2}+d+i T, \quad \frac{1}{2}-d+i T.
$$
Using the functional equation \eqref{FE-Artin}, we have
\begin{align*}
N(T,\chi)
&=\frac{1}{2\pi i} \int_{\mathcal{P}} \frac{\xi' (s,\chi)}{\xi (s,\chi)}+  \frac{\xi' (s,\bar{\chi})}{\xi (s,\bar{\chi})} ds
=\frac{1}{2\pi i} \int_{\mathcal{P}} \frac{\Xi'_\chi(s)}{\Xi_\chi(s)} ds,
\end{align*}
where $\mathcal{P}$ is the part of the contour $\mathcal{R}$ on $\Re(s)\ge 0$ (i.e., the path connecting $\frac{1}{2} - iT$, $\frac{1}{2} +d - iT$, $\frac{1}{2} +d + iT$, and $\frac{1}{2} +d$). In addition, by \eqref{functeqXi}, the symmetry of complex conjugation, we obtain
\begin{align*}
N(T,\chi)
&=\frac{1}{\pi i} \int_{\frac{1}{2} +d}^{\frac{1}{2} +d+iT} \frac{\Xi'_\chi(s)}{\Xi_\chi(s)} 
+ \frac{1}{\pi i}\int_{\frac{1}{2} +d+iT}^{\frac{1}{2} +iT} \frac{\Xi'_\chi(s)}{\Xi_\chi(s)} ds  .
\end{align*}
Hence, similar to \cite[Eq. (3.3)]{amberger2026estimatingnumberzerosdedekind} (which is rooted in a method of Turing  \cite[Sec. 4, Lemma 1]{Tur53}), we arrive at
\begin{align}\label{eq:Nchi-contour-reduction}
N(T,\chi)
&=
-\frac{1}{\pi}
\int_{1/2}^{1/2+d}
\left(
\Im\frac{\Xi_\chi'}{\Xi_\chi}(\sigma+iT)
-
\Im\frac{\Xi_\chi'}{\Xi_\chi}(\sigma+d+iT)
\right)
\,d\sigma
+
E_1(\Xi_\chi;T,d),
\end{align}
where
\begin{equation}\label{eq:E1-def}
E_1(\Xi_\chi;T,d)
=
\frac{1}{\pi}
\left(
2\Im\log \Xi_\chi\left(\frac12+d+iT\right)
-
\Im\log \Xi_\chi\left(\frac12+2d+iT\right)
\right).
\end{equation}
Equivalently,
\[
E_1(\Xi_\chi;T,d)
=
\mathcal{E}_1(\xi(s,\chi);T,d)
+
\mathcal{E}_1(\xi(s,\bar{\chi});T,d),
\]
where
\[
\mathcal{E}_1(F;T,d)
=
\frac{1}{\pi}
\left(
2\Im\log F\left(\frac12+d+iT\right)
-
\Im\log F\left(\frac12+2d+iT\right)
\right).
\]
By the Hadamard factorization for the entire function $\Xi_\chi$, which is of order one, we can write
\[
        \frac{\Xi_\chi'}{\Xi_\chi}(s)
        =
        B_\chi
        +
        \sum_{\rho\in\mathcal Z_\chi}
        \left(
        \frac{1}{\rho}
        +
        \frac{1}{s-\rho}
        \right),
\]
where $B_\chi\in\mathbb R$. This follows from \eqref{functeqXi}. Hence, for $s=\sigma+it$,
\[
\Im\frac{\Xi_\chi'}{\Xi_\chi}(s)
=
\sum_{\rho=\beta+i\gamma\in\mathcal{Z}_\chi}
\left(
\frac{\gamma-t}{(\sigma-\beta)^2+(t-\gamma)^2}
-
\frac{\gamma}{\beta^2+\gamma^2}
\right).
\]
Substituting this into \eqref{eq:Nchi-contour-reduction}, the terms involving
$\gamma/(\beta^2+\gamma^2)$ cancel, and we get
\begin{align}\label{eq:Nchi-zero-sum-before-pairing}
\begin{split}
&N(T,\chi)\\
&=
\frac{1}{\pi}
\int_{1/2}^{1/2+d}
\sum_{\rho=\beta+i\gamma\in\mathcal{Z}_\chi}
\left(
\frac{T-\gamma}{(\sigma-\beta)^2+(T-\gamma)^2}
-
\frac{T-\gamma}{(\sigma+d-\beta)^2+(T-\gamma)^2}
\right)
d\sigma
+
E_1(\Xi_\chi;T,d).
 \end{split}
\end{align}
For $b,t\in\mathbb{R}$ and $d>1/2$, define
\begin{equation}\label{eq:f-btd}
f(b,t;d)
=
2\tan^{-1}\left(\frac{b+d}{t}\right)
+
2\tan^{-1}\left(\frac{-b+d}{t}\right)
-
\tan^{-1}\left(\frac{b+2d}{t}\right)
-
\tan^{-1}\left(\frac{-b+2d}{t}\right).
\end{equation}
Pairing each zero $\rho=\beta+i\gamma$ with $1-\bar{\rho}$, and using the
symmetry of $\mathcal{Z}_\chi$, we obtain
\begin{equation}\label{eq:Nchi-f-sum}
N(T,\chi)
=
\frac{1}{2\pi}
\sum_{\rho=\beta+i\gamma\in\mathcal{Z}_\chi}
f\left(\frac12-\beta,T-\gamma;d\right)
+
E_1(\Xi_\chi;T,d).
\end{equation}
We now introduce a fourth parameter $a_4$. The operators $E_1,E_2,E_3$
are the same as in the three-parameter argument of
\cite{amberger2026estimatingnumberzerosdedekind}; the new contribution is
the operator $E_4$ defined below.\\
For a meromorphic function $F$, write
\[
D_F(s):=\frac{F'}{F}(s),\qquad
D_F'(s):=\left(\frac{F'}{F}\right)'(s),\qquad
D_F''(s):=\left(\frac{F'}{F}\right)''(s).
\]
Let
\[
        s_0:=\frac12+d+iT.
\]
Define
\begin{equation}\label{eq:E2-def-a4}
E_2(F;T,d)
:=
\frac{d a_1}{4}\Re D_F(s_0)
-
\frac{d^2a_2}{4}\Re D_F'(s_0),
\end{equation}
\begin{equation}\label{eq:E3-def-a4}
E_3(F;T,d)
:=
\frac{a_3}{4}\Im D_F'(s_0),
\end{equation}
and
\begin{equation}\label{eq:E4-def-a4}
E_4(F;T,d)
:=
-\frac{d^3a_4}{8}\Im D_F''(s_0).
\end{equation}
Here, $E_4$ is the only new operator. It corresponds to the term appearing in Lemma \ref{lem:cubic-kernel}.
\begin{lemma}\label{lem:cubic-kernel}
Let
\[
        I_3(x,t):=\frac{3x^2t-t^3}{(x^2+t^2)^3}.
\]
Then
\[
        I_3(x,t)=-\Im\frac{1}{(x+it)^3}.
\]
Moreover, for $F=\Xi_\chi$,
\[
        E_4(\Xi_\chi;T,d)
        =
        -\frac{d^3a_4}{8}
        \Im\left[
        \left(\frac{\xi'}{\xi}\right)''(s_0,\chi)
        +
        \left(\frac{\xi'}{\xi}\right)''(s_0,\bar\chi)
        \right].
\]
\end{lemma}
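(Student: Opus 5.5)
The lemma has two parts, and both come down to direct computation.

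For the first identity, I would rationalise the denominator:
\[
\frac{1}{(x+it)^3}=\frac{(x-it)^3}{(x^2+t^2)^3}.
\]
Expanding the numerator gives
\[
(x-it)^3 = x^3 - 3ix^2t - 3xt^2 + it^3 .
\]
Its imaginary part is $t^3-3x^2t$. Hence
\[
\Im\frac{1}{(x+it)^3}=\frac{t^3-3x^2t}{(x^2+t^2)^3}=-I_3(x,t),
\]
which is the claim; it holds for $(x,t)\neq(0,0)$.

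For the second identity, I would start from $\Xi_\chi=\xi(\cdot,\chi)\,\xi(\cdot,\bar\chi)$. Logarithmic differentiation is additive for products of meromorphic functions, so
\[
D_{\Xi_\chi}=\frac{\xi'}{\xi}(\cdot,\chi)+\frac{\xi'}{\xi}(\cdot,\bar\chi)
\]
as meromorphic functions. I would then differentiate this twice to get $D''_{\Xi_\chi}$ as the sum of the two second derivatives. Taking imaginary parts at $s_0$ and multiplying by $-d^3a_4/8$ in \eqref{eq:E4-def-a4} gives the stated formula, since $\Im$ is $\mathbb{R}$-linear.

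The only point needing care is that $s_0$ must not be a pole of either logarithmic derivative. Here $\Re s_0=\tfrac12+d>1$ because $d>\tfrac12$. In that half-plane $L_\chi$ and $L_{\bar\chi}$ are nonvanishing by the Euler product (Lemma~\ref{bd-Artin-L} gives a positive lower bound). The gamma factors and $A(\chi)^{s/2}$ have no zeros or poles there either, so $\xi(s_0,\chi)\xi(s_0,\bar\chi)\neq 0$ and everything is holomorphic near $s_0$.

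As a further remark, to connect with the kernel I would note that by the Hadamard expansion
\[
D''_{\Xi_\chi}(s)=\sum_{\rho\in\mathcal Z_\chi}\frac{2}{(s-\rho)^3}.
\]
Combined with the first identity, this shows that $E_4$ is a sum of $I_3$ terms over the zeros, which explains why the lemma is phrased this way. The main obstacle is only sign bookkeeping; no step is genuinely hard.
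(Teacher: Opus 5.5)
Your proof is correct and is essentially the paper's argument: expand $(x-it)^3$ to read off $I_3(x,t)=-\Im (x+it)^{-3}$, then use additivity of logarithmic derivatives for $\Xi_\chi=\xi(s,\chi)\xi(s,\bar\chi)$ and substitute into the definition of $E_4$. Your extra checks, that $s_0$ is not a pole (since $\Re s_0=\tfrac12+d>1$) and that $D''_{\Xi_\chi}(s)=\sum_{\rho\in\mathcal Z_\chi}2(s-\rho)^{-3}$, are not needed for the lemma itself. They are, however, consistent with how the lemma is used in Proposition~\ref{prop:four-param-zero-sum}.
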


\begin{proof}
The identity
\[
        I_3(x,t)=-\Im (x+it)^{-3}
\]
follows by expanding
\[
        (x-it)^3=(x^3-3xt^2)-i(3x^2t-t^3).
\]
Since $\Xi_\chi(s)=\xi(s,\chi)\xi(s,\bar\chi)$, 
we have
\[
        D_{\Xi_\chi}''(s)
        =
        \left(\frac{\xi'}{\xi}\right)''(s,\chi)
        +
        \left(\frac{\xi'}{\xi}\right)''(s,\bar\chi).
\]
Substituting this identity into the definition
\[
        E_4(F;T,d)=-\frac{d^3a_4}{8}\Im D_F''(s_0)
\]
gives the claimed formula for $F=\Xi_\chi$.
\end{proof}
For $x,t\in\mathbb R$, put
\[
        R_1(x,t):=\frac{x}{x^2+t^2},
        \qquad
        R_2(x,t):=\frac{x^2-t^2}{(x^2+t^2)^2},
        \qquad
        I_2(x,t):=\frac{2xt}{(x^2+t^2)^2}.
\]
We say that the tuple $(d,a_1,a_2,a_3,a_4)$ is admissible if $d>1/2$ and, for every
$|b|\le 1/2$ and every $t\ne0$, one has
\begin{align}\label{eq:four-param-kernel-upper}
f(b,t;d)
&\le
\frac{\pi}{4}
\Big[
        d a_1\{R_1(d+b,t)+R_1(d-b,t)\} 
        +d^2a_2\{R_2(d+b,t)+R_2(d-b,t)\} \notag\\
&\quad
        +a_3\{I_2(d+b,t)+I_2(d-b,t)\} 
        +d^3a_4\{I_3(d+b,t)+I_3(d-b,t)\}
\Big].
\end{align}
The upper inequality implies the lower bound
\begin{align}\label{eq:four-param-kernel-lower}
f(b,t;d)
&\ge
\frac{\pi}{4}
\Big[
        -d a_1\{R_1(d+b,t)+R_1(d-b,t)\}
        -d^2a_2\{R_2(d+b,t)+R_2(d-b,t)\} \notag\\
&\quad
        +a_3\{I_2(d+b,t)+I_2(d-b,t)\}
        +d^3a_4\{I_3(d+b,t)+I_3(d-b,t)\}
\Big].
\end{align}
Indeed, this follows from \eqref{eq:four-param-kernel-upper} upon replacing
$t$ by $-t$, since $f(b,t;d)$, $I_2(x,t)$, and $I_3(x,t)$ are odd in $t$,
whereas $R_1(x,t)$ and $R_2(x,t)$ are even in $t$.

\begin{proposition}[Four-parameter zero-sum bound]\label{prop:four-param-zero-sum}
Assume that $(d,a_1,a_2,a_3,a_4)$ is admissible. Then
\begin{equation}\label{eq:f-sum-upper-a4}
\frac{1}{2\pi}
\sum_{\rho\in\mathcal{Z}_\chi}
f\left(\frac12-\beta,T-\gamma;d\right)
\le
E_2(\Xi_\chi;T,d)+E_3(\Xi_\chi;T,d)+E_4(\Xi_\chi;T,d),
\end{equation}
and
\begin{equation}\label{eq:f-sum-lower-a4}
\frac{1}{2\pi}
\sum_{\rho\in\mathcal{Z}_\chi}
f\left(\frac12-\beta,T-\gamma;d\right)
\ge
-E_2(\Xi_\chi;T,d)+E_3(\Xi_\chi;T,d)+E_4(\Xi_\chi;T,d).
\end{equation}
\end{proposition}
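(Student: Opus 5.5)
Our plan is to apply the admissibility inequality \eqref{eq:four-param-kernel-upper} (resp. \eqref{eq:four-param-kernel-lower}) to each zero $\rho=\beta+i\gamma\in\mathcal Z_\chi$ with $b=\frac12-\beta$, $t=T-\gamma$, sum over $\rho$, and identify the resulting zero sums with the operators $E_2,E_3,E_4$ through the Hadamard product for $\Xi_\chi$. Since $0<\beta<1$ we have $|b|\le\frac12$. Since $T$ is assumed not to be an ordinate of a zero of $L_\chi$ or $L_{\bar\chi}$, we also have $t\neq0$. Hence the kernel inequality applies to every zero.

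The key computation concerns $x:=d+b=\frac12+d-\beta$ and $s_0-\rho=x+it$. We have $R_1(x,t)=\Re\frac{1}{s_0-\rho}$, $R_2(x,t)=\Re\frac{1}{(s_0-\rho)^2}$, $I_2(x,t)=-\Im\frac{1}{(s_0-\rho)^2}$, and, by Lemma~\ref{lem:cubic-kernel}, $I_3(x,t)=-\Im\frac{1}{(s_0-\rho)^3}$.

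Differentiating the Hadamard expansion of $\Xi_\chi'/\Xi_\chi$ gives
\[
D_{\Xi_\chi}'(s)=-\sum_\rho\frac{1}{(s-\rho)^2},\qquad D_{\Xi_\chi}''(s)=2\sum_\rho\frac{1}{(s-\rho)^3}.
\]
Both series converge absolutely, since $\sum_\rho|\rho|^{-2}<\infty$ for an entire function of order one. For the first-order term, $\Re D_{\Xi_\chi}(s_0)=B_\chi+\sum_\rho\Re\bigl(\frac1{\rho}+\frac1{s_0-\rho}\bigr)$. Using the symmetry $\rho\mapsto1-\bar\rho$ and $B_\chi\in\mathbb R$, one has $\Re(B_\chi+\sum_\rho 1/\rho)=0$; this is the standard fact that $\Re\,\Xi'/\Xi$ vanishes on the critical line, which follows from \eqref{functeqXi}. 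Hence $\Re D_{\Xi_\chi}(s_0)=\sum_\rho R_1(d+b,t)$, the sum being taken symmetrically in $|\gamma|$.

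The terms with argument $d-b$ arise from the partner zero $1-\bar\rho$. For this zero, $b'=-b$ and $t'=t$, so
\[
\sum_\rho\{R_1(d+b,t)+R_1(d-b,t)\}=2\sum_\rho R_1(d+b,t),
\]
and likewise for $R_2$, $I_2$ and $I_3$. Therefore
\begin{align*}
\sum_\rho\{R_2(d\pm b,t)\}&=-2\Re D'_{\Xi_\chi}(s_0),\\
\sum_\rho\{I_2(d\pm b,t)\}&=2\Im D'_{\Xi_\chi}(s_0),\\
\sum_\rho\{I_3(d\pm b,t)\}&=-\Im D''_{\Xi_\chi}(s_0),
\end{align*}
where each left-hand side abbreviates $\sum_\rho\{X(d+b,t)+X(d-b,t)\}$.

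Multiplying \eqref{eq:four-param-kernel-upper} by $\frac1{2\pi}$ and summing then gives
\[
\frac{\pi}{8\pi}\Bigl[2da_1\Re D-2d^2a_2\Re D'+2a_3\Im D'-d^3a_4\Im D''\Bigr]=E_2+E_3+E_4,
\]
in agreement with \eqref{eq:E2-def-a4}--\eqref{eq:E4-def-a4}. The lower bound follows identically from \eqref{eq:four-param-kernel-lower}.

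The main obstacle is justifying the termwise summation. The right-hand side of the kernel inequality is not absolutely summable in general: $R_1$ decays only like $1/t^2$, which is fine, but $\sum_\rho R_1$ must still be matched with $\Re D$, where $\sum_\rho 1/\rho$ converges only conditionally.

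We would handle this by summing over zeros with $|\gamma|\le Y$, using symmetric partial sums that are invariant under $\rho\mapsto1-\bar\rho$ and $\rho\mapsto\bar\rho$, and then letting $Y\to\infty$. On the left, $f(b,t;d)=O(d^3/t^3)$ as $|t|\to\infty$, so $\sum_\rho f$ converges absolutely. On the right, $R_2$, $I_2$ and $I_3$ are $O(t^{-2})$, and $R_1=x/(x^2+t^2)$ is also $O(t^{-2})$ because $x\in[d-\frac12,d+\frac12]$. Hence every sum on the right converges absolutely too. The only delicate point is the identification of $\Re D_{\Xi_\chi}(s_0)$, which we would establish via the symmetric partial sums of the Hadamard product.
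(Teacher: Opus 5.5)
Your proposal is correct and follows essentially the same route as the paper. You apply the admissible kernel inequality to each zero with $b=\frac12-\beta$, $t=T-\gamma$, then use the pairing $\rho\mapsto 1-\bar\rho$ (which turns $d+b$ into $d-b$) together with the differentiated Hadamard expansion to identify the zero sums with $E_2,E_3,E_4$; the only genuinely new identity is $\sum_\rho\{I_3(d+b,t)+I_3(d-b,t)\}=-\Im D_{\Xi_\chi}''(s_0)$.

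You also spell out the convergence and the fact $\Re(B_\chi+\sum_\rho 1/\rho)=0$, both of which the paper leaves implicit by deferring to Amberger. The symmetric partial sums are not needed, however: since $\Re(1/\rho)=\beta/|\rho|^2$ and $R_1(d+b,t)=O\bigl(1/(1+t^2)\bigr)$, every real-part sum involved is absolutely convergent.
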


\begin{proof}
We prove the upper bound; the lower bound follows from
\eqref{eq:four-param-kernel-lower}. Put
\[
        b=\frac12-\beta,
        \qquad
        t=T-\gamma.
\]
For a zero $\rho=\beta+i\gamma$, we have
\[
        s_0-\rho=d+b+it.
\]
The zero paired with $\rho$ by the functional equation is
$1-\bar\rho=1-\beta+i\gamma$, and
\[
        s_0-(1-\bar\rho)=d-b+it.
\]
The $R_1$, $R_2$, and $I_2$ terms give exactly the operators $E_2$ and $E_3$,
as in the three-parameter argument of
\cite{amberger2026estimatingnumberzerosdedekind}. For the new cubic term,
Lemma \ref{lem:cubic-kernel} gives
\[
I_3(d+b,t)+I_3(d-b,t)
=
-\Im\left(
        \frac{1}{(s_0-\rho)^3}
        +
        \frac{1}{(s_0-(1-\bar\rho))^3}
\right).
\]
Since $\rho\mapsto 1-\bar\rho$ permutes $\mathcal Z_\chi$, summing over
$\rho\in\mathcal Z_\chi$ gives
\[
\sum_{\rho\in\mathcal Z_\chi}
\{I_3(d+b,t)+I_3(d-b,t)\}
=
-\Im D_{\Xi_\chi}''(s_0).
\]
Multiplying the pointwise inequality by $1/(2\pi)$, the cubic term therefore
contributes
\[
        -\frac{d^3a_4}{8}\Im D_{\Xi_\chi}''(s_0)
        =
        E_4(\Xi_\chi;T,d).
\]
This proves the upper bound.
\end{proof}
Combining \eqref{eq:Nchi-f-sum} with Proposition
\ref{prop:four-param-zero-sum}, and recalling that our operators are
normalized for the half-count $N_{\Xi_\chi}(T)/2=N(T,\chi)$, we obtain
\begin{equation}\label{eq:Eu-El-bound-a4}
E_\ell(\Xi_\chi;T,d)
\le
N(T,\chi)
\le
E_u(\Xi_\chi;T,d),
\end{equation}
where
\begin{equation}\label{eq:Eu-def-a4}
E_u(F;T,d)
:=
E_1(F;T,d)+E_2(F;T,d)+E_3(F;T,d)+E_4(F;T,d),
\end{equation}
and
\begin{equation}\label{eq:El-def-a4}
E_\ell(F;T,d)
:=
E_1(F;T,d)-E_2(F;T,d)+E_3(F;T,d)+E_4(F;T,d).
\end{equation}
The operators $E_1, E_2, E_3,E_4$, and hence also $E_u, E_{\ell}$, are additive with respect to products:
$$
E_j(F G ; T, d)=E_j(F ; T, d)+E_j(G ; T, d), \quad j=1,2,3,4 .
$$
Therefore, since $\chi$ is non-trivial and
$$
\Xi_\chi(s)=A(\chi)^s \gamma_\chi(s)^2 L_\chi(s) L_{\bar{\chi}}(s)
$$
we have, for $\star \in\{u, \ell\}$,
\begin{equation}\label{eq:Estar-Xi-decomposition}
    E_{\star}\left(\Xi_\chi ; T, d\right)=E_{\star}\left(A(\chi)^s ; T, d\right)+2 E_{\star}\left(\gamma_\chi(s) ; T, d\right)+E_{\star}\left(L_\chi(s) ; T, d\right)+E_{\star}\left(L_{\bar{\chi}}(s) ; T, d\right).
\end{equation}
This is the decomposition that will be estimated term by term.
\subsection{The trivial character}

We now treat the case that $\chi=\chi_0$ is the trivial character. In this case
$L(s,\chi,L/K)=\zeta_K(s)$, and we do not use the auxiliary product
$\Xi_\chi(s)$. Instead, we work directly with the completed Dedekind
zeta-function
\[
        \xi_K(s)
        =
        s(s-1)d_K^{s/2}\gamma_K(s)\zeta_K(s).
\]
Thus the zero-counting argument is the same as above, but with $\Xi_\chi$
replaced by $\xi_K$. Let $\mathcal{Z}_K$ denote the multiset of non-trivial
zeros of $\zeta_K(s)$. Since $\xi_K(s)=\xi_K(1-s)$ and
$\xi_K(s)=\overline{\xi_K(\bar{s})}$, if
$\rho=\beta+i\gamma\in\mathcal{Z}_K$, then
$1-\bar{\rho}=1-\beta+i\gamma$ also belongs to $\mathcal{Z}_K$, with the
same multiplicity.\\
Applying the same argument in the Dedekind case gives
\begin{equation}\label{eq:NK-f-sum}
N_K(T)
=
\frac{1}{\pi}
\sum_{\rho=\beta+i\gamma\in\mathcal Z_K}
f\left(\frac12-\beta,T-\gamma;d\right)
+
E_1^K(\xi_K;T,d),
\end{equation}
where, for $j=1,2,3,4$, we define
\[
        E_j^K(F;T,d):=2E_j(F;T,d).
\]
Then
\[
E_u^K(F;T,d):=E_1^K(F;T,d)+E_2^K(F;T,d)+E_3^K(F;T,d)+E_4^K(F;T,d),
\]
and
\[
        E_\ell^K(F;T,d):=E_1^K(F;T,d)-E_2^K(F;T,d)+E_3^K(F;T,d)+E_4^K(F;T,d).
\]
With this notation, it follows that
\[
        E_\ell^K(\xi_K;T,d) \le N_K(T) \le E_u^K(\xi_K;T,d).
\]
In addition, recalling that $        \xi_K(s)=s(s-1)d_K^{s/2}\gamma_K(s)\zeta_K(s)$,
 we deduce
\begin{align}\label{eq:Estar-trivial-decomposition}
E_\star^K(\xi_K;T,d)
&=
E_\star^K(s(s-1);T,d)
+
E_\star^K(d_K^{s/2};T,d)+
E_\star^K(\gamma_K(s);T,d)
+
E_\star^K(\zeta_K(s);T,d)
\end{align}
 for $\star\in\{u,\ell\}$.
This is an analogue of \eqref{eq:Estar-Xi-decomposition} for the trivial character.
\begin{remark}
    Note that $E_u((s(s-1))^{\delta(\chi)})$ and $E_\ell((s(s-1))^{\delta(\chi)})$ are non-zero only if $\chi=\chi_0$ is the trivial character. In light of this, we will separate the case of trivial character in the following sections.
\end{remark}

\subsection{Estimating the terms}

We now estimate the terms appearing in
\eqref{eq:Estar-Xi-decomposition} and
\eqref{eq:Estar-trivial-decomposition}, following the strategy of
\cite{amberger2026estimatingnumberzerosdedekind}. The parameters
$d,a_1,a_2,a_3,a_4$ are kept variable for now and will be optimized later. Denote
\[
        m_\chi:=n_K\chi(1),
        \qquad
        \sigma_1:=\frac12+d,
        \qquad
        \sigma_2:=\frac12+2d.
\]
For $\sigma>1$, define
\[
        \mathcal L_1(\sigma):=-\frac{\zeta'}{\zeta}(\sigma),
        \qquad
        \mathcal L_2(\sigma):=
        \left(\frac{\zeta'}{\zeta}\right)'(\sigma),\qquad \mathcal L_3(\sigma):=-\left(\frac{\zeta'}{\zeta}\right)''(\sigma).
\]
Note that $\mathcal L_1(\sigma)$, $\mathcal L_2(\sigma)$, and $\mathcal L_3(\sigma)$ are positive for
$\sigma>1$.

\begin{lemma}\label{lem:conductor-term-artin}
For a non-trivial irreducible character $\chi$, one has
\[
E_u(A(\chi)^s;T,d)
=
\left(
\frac{T}{\pi}+\frac{d a_1}{4}
\right)\log A(\chi),
\]
and
\[
E_\ell(A(\chi)^s;T,d)
=
\left(
\frac{T}{\pi}-\frac{d a_1}{4}
\right)\log A(\chi).
\]
\end{lemma}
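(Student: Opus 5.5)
The proof is a direct computation from the definitions \eqref{eq:E1-def}, \eqref{eq:E2-def-a4}, \eqref{eq:E3-def-a4} and \eqref{eq:E4-def-a4}, applied to $F(s)=A(\chi)^s$. Here $A(\chi)^s$ is the factor of $\Xi_\chi$ coming from the two factors $A(\chi)^{s/2}$ in $\xi(s,\chi)$ and $\xi(s,\bar\chi)$. I would then combine the four pieces according to \eqref{eq:Eu-def-a4} and \eqref{eq:El-def-a4}.

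\emph{Choice of logarithm.} Since $A(\chi)>0$, take $\log F(s)=s\log A(\chi)$. This is the branch obtained by continuous variation from the real axis, as implicit in the contour reduction \eqref{eq:Nchi-contour-reduction}, and it is real on the real axis.

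\emph{The term $E_1$.} With this branch, $\Im\log F(\tfrac12+d+iT)=T\log A(\chi)$ and $\Im\log F(\tfrac12+2d+iT)=T\log A(\chi)$. Hence
\[
E_1(A(\chi)^s;T,d)=\frac{1}{\pi}\bigl(2T-T\bigr)\log A(\chi)=\frac{T}{\pi}\log A(\chi).
\]

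\emph{The derivative terms.} The logarithmic derivative is constant: $D_F(s)=\log A(\chi)$. Therefore $D_F'\equiv 0$ and $D_F''\equiv 0$. At $s_0$ this gives $\Re D_F(s_0)=\log A(\chi)$ and $\Re D_F'(s_0)=\Im D_F'(s_0)=\Im D_F''(s_0)=0$. Consequently
\[
E_2(A(\chi)^s;T,d)=\frac{da_1}{4}\log A(\chi),\qquad E_3(A(\chi)^s;T,d)=E_4(A(\chi)^s;T,d)=0.
\]

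\emph{Conclusion.} Adding the pieces with the sign conventions $E_u=E_1+E_2+E_3+E_4$ and $E_\ell=E_1-E_2+E_3+E_4$ gives exactly the two stated identities.

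The only point requiring care is the branch of $\Im\log F$ in $E_1$: it must be the same continuous branch used to derive \eqref{eq:Nchi-contour-reduction}. With $\log A(\chi)^s=s\log A(\chi)$ this holds, since that function is continuous and real for real $s$. There is no genuine obstacle beyond this.
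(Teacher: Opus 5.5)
Your proposal is correct and is the same direct computation the paper uses. You take $\log F(s)=s\log A(\chi)$, so $D_F=\log A(\chi)$ and $D_F'=D_F''=0$, and then substitute into the definitions of $E_1,\dots,E_4$ and combine them according to $E_u$ and $E_\ell$.
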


\begin{proof}
For $F(s)=A(\chi)^s$, we have
\[
        \log F(s)=s\log A(\chi),
        \qquad
        \frac{F'}{F}(s)=\log A(\chi),
        \qquad
        \left(\frac{F'}{F}\right)'(s)=0,\qquad \left(\frac{F'}{F}\right)''(s)=0.
\]
Substituting these identities into the definitions of $E_1,E_2,E_3,E_4$ gives
the result.
\end{proof}
For the trivial character we work directly with $\xi_K(s)$ rather than
with $\Xi_\chi(s)$. To match the normalization of the Dedekind zeta
zero-counting function, define
\[
        E_j^K(F;T,d):=2E_j(F;T,d),
        \qquad j=1,2,3,4
\]
and
\[
        E_u^K:=E_1^K+E_2^K+E_3^K+E_4^K,
        \qquad
        E_\ell^K:=E_1^K-E_2^K+E_3^K+E_4^K.
\]
Then
\[
        E_\ell^K(\xi_K;T,d)
        \le
        N_K(T)
        \le
        E_u^K(\xi_K;T,d).
\]

\begin{lemma}\label{lem:conductor-term-trivial}
When $\chi=\chi_0$, one has
\[
E_u^K(d_K^{s/2};T,d)
=
\left(
\frac{T}{\pi}+\frac{d a_1}{4}
\right)\log d_K,
\]
and
\[
E_\ell^K(d_K^{s/2};T,d)
=
\left(
\frac{T}{\pi}-\frac{d a_1}{4}
\right)\log d_K.
\]
\end{lemma}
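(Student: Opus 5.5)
This is a direct computation, mirroring the proof of Lemma~\ref{lem:conductor-term-artin}, with $F(s)=d_K^{s/2}$. On the principal branch we have
\[
\log F(s)=\tfrac{s}{2}\log d_K,
\qquad
\frac{F'}{F}(s)=\tfrac12\log d_K,
\qquad
\Big(\frac{F'}{F}\Big)'(s)=\Big(\frac{F'}{F}\Big)''(s)=0 .
\]
So the only work is to substitute these into $E_1,\dots,E_4$ and then double everything, since $E_j^K=2E_j$.

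For $E_1$, write $\sigma_1=\frac12+d$ and $\sigma_2=\frac12+2d$. Then
\[
\Im\log F(\sigma_1+iT)=\Im\log F(\sigma_2+iT)=\tfrac{T}{2}\log d_K,
\]
which gives
\[
E_1(F;T,d)=\frac{1}{\pi}\Big(2\cdot\tfrac{T}{2}-\tfrac{T}{2}\Big)\log d_K=\frac{T}{2\pi}\log d_K ,
\]
and hence $E_1^K=\frac{T}{\pi}\log d_K$.

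For $E_2$, the term involving $D_F'$ vanishes and $\Re D_F(s_0)=\frac12\log d_K$. This gives $E_2=\frac{da_1}{8}\log d_K$, and so $E_2^K=\frac{da_1}{4}\log d_K$.

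For $E_3$ and $E_4$, both $D_F'$ and $D_F''$ vanish identically, so $E_3=E_4=0$.

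Summing the pieces according to the definitions of $E_u^K$ and $E_\ell^K$ gives $\big(\frac{T}{\pi}\pm\frac{da_1}{4}\big)\log d_K$, as claimed.

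The only point that needs care is the branch of $\log F$ used in $E_1$. Since $d_K^{s/2}$ is zero-free and we take the continuous branch $\frac s2\log d_K$ along the path from the real axis, this branch is consistent with the contour computation that produced \eqref{eq:Nchi-contour-reduction}. There is no genuine obstacle beyond this.
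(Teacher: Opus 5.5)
Your computation is correct and is essentially the paper's own proof. Both substitute $\log F=\frac{s}{2}\log d_K$, $D_F=\frac12\log d_K$, and $D_F'=D_F''=0$ into $E_1,\dots,E_4$, and let the normalization $E_j^K=2E_j$ absorb the factor $\frac12$; your explicit values $E_1=\frac{T}{2\pi}\log d_K$ and $E_2=\frac{da_1}{8}\log d_K$, together with your branch remark, simply spell out steps the paper leaves implicit.
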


\begin{proof}
For $F(s)=d_K^{s/2}$, one has
\[
        \log F(s)=\frac{s}{2}\log d_K,
        \qquad
        D_F(s)=\frac12\log d_K,
        \qquad
        D_F'(s)=D_F''(s)=0.
\]
Thus $E_3(F;T,d)=E_4(F;T,d)=0$. Moreover, the factor $1/2$ coming from
$d_K^{s/2}$ is cancelled by the normalization
$E_j^K=2E_j$. Substituting into the definitions of $E_u^K$ and $E_\ell^K$ gives the stated identities.
\end{proof}
\begin{lemma}\label{lem:polar-factor-trivial}
Let
\[
        x_+:=d+\frac12,
        \qquad
        x_-:=d-\frac12,
        \qquad
        y_+:=2d+\frac12,
        \qquad
        y_-:=2d-\frac12.
\]
For $F(s)=s(s-1)$, one has
\begin{align}\label{eq:E1K-polar-factor}
E_1^K(F;T,d)
&=
\frac{2}{\pi}
\left(
2\tan^{-1}\frac{T}{x_+}
+
2\tan^{-1}\frac{T}{x_-}
-
\tan^{-1}\frac{T}{y_+}
-
\tan^{-1}\frac{T}{y_-}
\right),
\end{align}
\begin{align}\label{eq:E2K-polar-factor}
E_2^K(F;T,d)
&=
\frac{d a_1}{2}
\left(
\frac{x_+}{x_+^2+T^2}
+
\frac{x_-}{x_-^2+T^2}
\right)
+
\frac{d^2a_2}{2}
\left(
\frac{x_+^2-T^2}{(x_+^2+T^2)^2}
+
\frac{x_-^2-T^2}{(x_-^2+T^2)^2}
\right),
\end{align}
\begin{align}\label{eq:E3K-polar-factor}
E_3^K(F;T,d)
&=
\frac{a_3}{2}
\left(
\frac{2x_+T}{(x_+^2+T^2)^2}
+
\frac{2x_-T}{(x_-^2+T^2)^2}
\right),
\end{align}
and
\begin{align}\label{eq:E4K-polar-factor-a4}
E_4^K(F;T,d)
&=
\frac{d^3a_4}{2}
\left(
\frac{3x_+^2T-T^3}{(x_+^2+T^2)^3}
+
\frac{3x_-^2T-T^3}{(x_-^2+T^2)^3}
\right).
\end{align}
Consequently,
\[
E_u^K(s(s-1);T,d)
=
E_1^K+E_2^K+E_3^K+E_4^K,
\]
and
\[
E_\ell^K(s(s-1);T,d)
=
E_1^K-E_2^K+E_3^K+E_4^K,
\]
where all terms are evaluated at $(s(s-1);T,d)$.
\end{lemma}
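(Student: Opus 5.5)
The approach is direct computation from the definitions, since $F(s)=s(s-1)$ is elementary. We have $D_F(s)=\frac1s+\frac1{s-1}$, $D_F'(s)=-\frac1{s^2}-\frac1{(s-1)^2}$ and $D_F''(s)=\frac2{s^3}+\frac2{(s-1)^3}$. At $s_0=\frac12+d+iT$ one has $s_0=x_++iT$ and $s_0-1=x_-+iT$, and $x_\pm>0$ because $d>\frac12$. So every term reduces to evaluating $\Re$ and $\Im$ of $(x+iT)^{-k}$ for $k=1,2,3$ at $x=x_\pm$.

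These evaluations are as follows:
\[
\Re\frac{1}{x+iT}=\frac{x}{x^2+T^2}=R_1(x,T),\qquad
\Re\frac{1}{(x+iT)^2}=R_2(x,T),\qquad
\Im\frac{1}{(x+iT)^2}=-I_2(x,T).
\]
By Lemma~\ref{lem:cubic-kernel}, $\Im (x+iT)^{-3}=-I_3(x,T)$.

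Substituting into the definitions with $E_j^K=2E_j$:
\begin{itemize}
\item $E_2^K=\frac{da_1}{2}\Re D_F(s_0)-\frac{d^2a_2}{2}\Re D_F'(s_0)$. The minus sign combines with the minus in $D_F'$ to give $+\frac{d^2a_2}{2}\sum_\pm R_2(x_\pm,T)$, which is \eqref{eq:E2K-polar-factor}.
\item $E_3^K=\frac{a_3}{2}\Im D_F'(s_0)=\frac{a_3}{2}\sum_\pm I_2(x_\pm,T)$, which is \eqref{eq:E3K-polar-factor}.
\item $E_4^K=-\frac{d^3a_4}{4}\Im D_F''(s_0)=-\frac{d^3a_4}{4}\cdot 2\sum_\pm(-I_3(x_\pm,T))=\frac{d^3a_4}{2}\sum_\pm I_3(x_\pm,T)$, which is \eqref{eq:E4K-polar-factor-a4}.
\end{itemize}

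For $E_1^K$, use $\Im\log F(s)=\arg s+\arg(s-1)$ with continuous branches along horizontal lines from the real axis. This is consistent with the convention in \eqref{eq:E1-def}, where $\log\Xi$ is continued from the real point $\frac12+d$. Since the real parts are positive, $\arg(x+iT)=\tan^{-1}(T/x)$. At $\frac12+d+iT$ the real parts of $s$ and $s-1$ are $x_\pm$; at $\frac12+2d+iT$ they are $y_\pm$, and $y_->0$. Hence
\[
E_1^K=\frac{2}{\pi}\Bigl(2\tan^{-1}\tfrac{T}{x_+}+2\tan^{-1}\tfrac{T}{x_-}-\tan^{-1}\tfrac{T}{y_+}-\tan^{-1}\tfrac{T}{y_-}\Bigr),
\]
which is \eqref{eq:E1K-polar-factor}.

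The final statements for $E_u^K$ and $E_\ell^K$ follow from their definitions.

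The only delicate point is the branch and sign bookkeeping in $E_1^K$, together with the sign conventions in $E_3$ and $E_4$; the paper's footnote warns that Amberger's $a_3$ sign was inconsistent. I would double-check these signs against Proposition~\ref{prop:four-param-zero-sum}, by viewing $s(s-1)$ as contributing poles rather than zeros, which flips the signs relative to the zero sum.
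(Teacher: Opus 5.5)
Your computation is correct and is essentially the paper's proof: you compute $D_F$, $D_F'$, $D_F''$ for $F(s)=s(s-1)$, write $s_0=x_++iT$ and $s_0-1=x_-+iT$, and substitute into the definitions of $E_1,\dots,E_4$ with $E_j^K=2E_j$ (you also write out $D_F''$, which is needed for $E_4^K$ and which the paper leaves implicit). One correction to your closing aside: $s(s-1)$ has zeros at $0$ and $1$, not poles, and these enter exactly like zeros with $b=\pm\frac12$, $t=T$ in the kernel sum, so no sign flip relative to Proposition~\ref{prop:four-param-zero-sum} should be expected---although no such cross-check is needed, since the lemma is a purely definitional computation.
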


\begin{proof}
For $F(s)=s(s-1)$, we have
\[
        \frac{F'}{F}(s)=\frac{1}{s}+\frac{1}{s-1},
        \quad\text{and}\quad
        \left(\frac{F'}{F}\right)'(s)
        =
        -\frac{1}{s^2}
        -
        \frac{1}{(s-1)^2}.
\]
Substituting $s_0=1/2+d+iT$ into the definitions of $E_1^K,E_2^K,E_3^K,E_4^K$ gives the stated relations.
\end{proof}
We now adapt the Euler-product estimate of
\cite[Lemma 3.8]{amberger2026estimatingnumberzerosdedekind} to the four-parameter setting. The only new feature is the contribution of the operator $E_4$, which produces an additional third-order local term. For $q>1$ and $\phi\in\mathbb R$, define
\[
        w_1=w_1(q,\phi):=q^{-\sigma_1}e^{i\phi},
        \qquad
        w_2=w_2(q,\phi):=q^{-\sigma_2}e^{i\phi},
        \qquad
        L_q:=\log q.
\]
Let
\[
        A_j(q,\phi):=\Im\bigl(-\log(1-w_j)\bigr),
        \qquad j=1,2,
\]
where the logarithm is the analytic branch in the unit disc. Further set
\[
B_1(q,\phi):=-L_q\frac{w_1}{1-w_1},\qquad  B_2(q,\phi):=L_q^2\frac{w_1}{(1-w_1)^2},\qquad   B_3(q,\phi):=-L_q^3\frac{w_1(1+w_1)}{(1-w_1)^3}.
\]
Define the quantity $q_u(q,\phi)$ by
\begin{align}\label{eq:qu-four-param}
\frac{2}{\pi}
\left(
2A_1(q,\phi)-A_2(q,\phi)
\right)
+
\frac{d a_1}{2}\Re B_1(q,\phi)
-
\frac{d^2a_2}{2}\Re B_2(q,\phi)
+
\frac{a_3}{2}\Im B_2(q,\phi)
-
\frac{d^3a_4}{4}\Im B_3(q,\phi),
\end{align}
which is the four-parameter analogue of
Amberger's function $q_1(q,\phi)$. More precisely,
\[
        q_u(q,\phi)
        =
        q_1(q,\phi)
        -
        \frac{d^3a_4}{4}\Im B_3(q,\phi),
\]
where the extra term is the new contribution coming from $E_4$. Finally, define
\begin{equation}\label{eq:CE-four-param}
\mathcal C_E(d,a_1,a_2,a_3,a_4)
:=
\sum_p
\max\left\{
0,\,
\sup_{\substack{m\ge1\\ \phi\in[0,2\pi]}}
q_u(p^m,\phi)
\right\},
\end{equation}
where the sum is over primes.

\begin{lemma}\label{lem:four-param-euler-contribution}
Let $\chi$ be a non-trivial irreducible character of $\Gal(L/K)$, and put
$m_\chi=n_K\chi(1)$. Then
\begin{equation}\label{eq:artin-euler-upper-four-param}
E_u(L_\chi;T,d)+E_u(L_{\bar\chi};T,d)
\le
m_\chi\,
\mathcal C_E(d,a_1,a_2,a_3,a_4),
\end{equation}
and
\begin{equation}\label{eq:artin-euler-lower-four-param}
E_\ell(L_\chi;T,d)+E_\ell(L_{\bar\chi};T,d)
\ge
-
m_\chi\,
\mathcal C_E(d,a_1,a_2,a_3,a_4).
\end{equation}
\end{lemma}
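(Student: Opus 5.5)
The plan is to expand everything into local Euler factors and reduce the estimate to a statement about a single factor $(1-\lambda q^{-s})^{-1}$. Both points $s_0=\sigma_1+iT$ and $\sigma_2+iT$ lie in $\Re(s)>1$, because $d>1/2$. There \eqref{L-prod} gives
\[
\log L_\chi(s)=\sum_{\p}\sum_{1\le j\le\chi(1)}-\log\bigl(1-\lambda_{j,\p}\norm(\p)^{-s}\bigr).
\]
This series converges absolutely, together with its $s$-derivatives, uniformly near both points. I will take as given, as the paper does, that the branch of $\Im\log$ entering $E_1$ is the one given by the Euler product (Corollary~\ref{cor:artin-log-derivative-bounds}). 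Hence each of the additive operators $E_1,\dots,E_4$ applied to $L_\chi$ splits as a sum over the local factors $F_{j,\p}(s)=(1-\lambda_{j,\p}\norm(\p)^{-s})^{-1}$, and the same holds for $L_{\bar\chi}$. Factors with $\lambda_{j,\p}=0$ contribute nothing.

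\textbf{Step 1: one local factor.} Write $q=\norm(\p)$ and $\lambda=e^{i\theta}$, which is allowed since nonzero eigenvalues have modulus one. Set $\phi=\theta-T\log q$. Then $\lambda q^{-s_0}=w_1(q,\phi)$ and $\lambda q^{-(\sigma_2+iT)}=w_2(q,\phi)$. For $F=(1-\lambda q^{-s})^{-1}$ one computes
\[
D_F=-L_q\frac{w}{1-w},\qquad D_F'=L_q^2\frac{w}{(1-w)^2},\qquad D_F''=-L_q^3\frac{w(1+w)}{(1-w)^3},
\]
with $w=\lambda q^{-s}$. At $s_0$ these are exactly $B_1,B_2,B_3$. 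Also $\Im\log F=A_j$ at the two points, so $E_1(F)=\frac1\pi(2A_1-A_2)$. Comparing with \eqref{eq:qu-four-param} gives
\[
E_u(F;T,d)=\tfrac12 q_u(q,\phi)\le \tfrac12\max\Bigl\{0,\sup_{\phi}q_u(q,\phi)\Bigr\}.
\]

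\textbf{Step 2: the lower bound by conjugation.} Under $\phi\mapsto-\phi$, each $w_j$ is conjugated. So $A_1,A_2$ and $\Im B_2,\Im B_3$ change sign, while $\Re B_1,\Re B_2$ are unchanged. A direct check then gives
\[
E_\ell(F;T,d)=-\tfrac12 q_u(q,-\phi)\ge-\tfrac12\max\Bigl\{0,\sup_\phi q_u(q,\phi)\Bigr\}.
\]
This reduces \eqref{eq:artin-euler-lower-four-param} to the same bookkeeping as the upper bound.

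\textbf{Step 3: summing.} For each prime $\p$ of $K$, the functions $L_\chi$ and $L_{\bar\chi}$ together contribute at most $2\chi(1)$ local factors. Summing Step 1 over $\p$ gives
\[
E_u(L_\chi)+E_u(L_{\bar\chi})\le \chi(1)\sum_{\p}\max\Bigl\{0,\sup_\phi q_u(\norm\p,\phi)\Bigr\}.
\]
Now group the primes $\p$ of $K$ above each rational prime $p$. Each such $\p$ has $\norm\p=p^{f_\p}$, and there are at most $\sum_{\p\mid p}f_\p\le n_K$ of them. So the inner sum over $\p\mid p$ is at most
\[
n_K\max\Bigl\{0,\sup_{m\ge1,\ \phi}q_u(p^m,\phi)\Bigr\}.
\]
Summing over $p$ gives $m_\chi\,\mathcal C_E$. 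The lower bound follows identically from Step 2.

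\textbf{Main obstacle.} I expect the delicate points to be the sign bookkeeping in Step 2, so that $E_\ell$ is genuinely controlled by $q_u$ and no separate quantity $q_\ell$ is needed. A second point is that $\mathcal C_E$ must be finite for the interchange of sums to be meaningful. If $\mathcal C_E=\infty$, the inequality is vacuous, so only the absolute convergence of the local expansions is required, and this holds because $\sigma_1>1$.
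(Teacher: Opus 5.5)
Your proposal is correct and follows essentially the same route as the paper: you reduce to a single local factor, identify its contribution as $\tfrac12 q_u(q,\phi)$, use $q_\ell(q,\phi)=-q_u(q,-\phi)$ for the lower bound, and bound the number of primes of $K$ above each $p$ by $n_K$. The only minor difference is in handling $|\lambda_{j,\p}|\le 1$. You use directly that the nonzero eigenvalues are roots of unity and let the zero eigenvalues be absorbed by the $\max\{0,\cdot\}$, whereas the paper invokes the maximum principle over $|z|\le 1$; both are valid.
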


\begin{proof}
For $\Re s>1$, each local Artin $L$-function can be written as
\[
        L_{\mathfrak p}(s,\chi)
        =
        \prod_{j=1}^{\chi(1)}
        \left(1-\lambda_{j,\mathfrak p}(\mathrm N\mathfrak p)^{-s}\right)^{-1},
\]
where $|\lambda_{j,\mathfrak p}|\le1$, after adjoining zero eigenvalues if
necessary. It is therefore enough to consider a single local factor
\[
        F_z(s):=(1-zq^{-s})^{-1},
        \qquad |z|\le1,
        \qquad q=\mathrm N\mathfrak p.
\]
The contribution of $F_z$ to $E_u$ is a real linear combination of real and
imaginary parts of functions holomorphic in $z$ on the disc $|z|<1$ and
continuous on $|z|\le1$. Hence, by the maximum principle for harmonic
functions, its maximum over $|z|\le1$ is attained on the unit circle. Write $z=e^{i\theta}$ and let $\phi=\theta-T\log q$. Then
\[
        zq^{-(\frac12+d+iT)}
        =
        q^{-\sigma_1}e^{i\phi}
        =
        w_1,
        \qquad
        zq^{-(\frac12+2d+iT)}
        =
        q^{-\sigma_2}e^{i\phi}
        =
        w_2.
\]
Moreover,
\[
        \frac{F_z'}{F_z}\left(\frac12+d+iT\right)
        =
        -(\log q)\frac{w_1}{1-w_1}
        =
        B_1(q,\phi),
\]
\[
        \left(\frac{F_z'}{F_z}\right)'
        \left(\frac12+d+iT\right)
        =
        (\log q)^2\frac{w_1}{(1-w_1)^2}
        =
        B_2(q,\phi),
\]
and
\[
        \left(\frac{F_z'}{F_z}\right)''
        \left(\frac12+d+iT\right)
        =
        -(\log q)^3
        \frac{w_1(1+w_1)}{(1-w_1)^3}
        =
        B_3(q,\phi).
\]
Substituting these identities into the definition $  E_u=E_1+E_2+E_3+E_4$ gives
\[
        E_u(F_z;T,d)=\frac12 q_u(q,\phi).
\]
Hence the paired local contribution from $L_\chi(s)L_{\bar\chi}(s)$ is
bounded above by $ q_u(q,\phi)$. For each prime ideal $\mathfrak p$, summing over the at most $\chi(1)$ local
roots gives
\[
        \chi(1)
        \sup_{\phi\in[0,2\pi]}
        q_u(\mathrm N\mathfrak p,\phi).
\]
Therefore
\[
E_u(L_\chi;T,d)+E_u(L_{\bar\chi};T,d)
\le
\chi(1)
\sum_{\mathfrak p\subset\mathcal O_K}
\sup_{\phi\in[0,2\pi]}
q_u(\mathrm N\mathfrak p,\phi).
\]
For a rational prime $p$, every prime ideal of $K$ above $p$ has norm $p^m$
for some $m\ge1$, and the number of prime ideals above $p$ is at most
$n_K$. Hence
\[
\sum_{\mathfrak p\subset\mathcal O_K}
\sup_{\phi}
q_u(\mathrm N\mathfrak p,\phi)
\le
n_K
\sum_p
\max\left\{
0,\,
\sup_{\substack{m\ge1\\ \phi\in[0,2\pi]}}
q_u(p^m,\phi)
\right\}.
\]
By the definition of $\mathcal C_E$, this proves
\eqref{eq:artin-euler-upper-four-param}.\\
For the lower bound, let $q_\ell(q,\phi)$ denote the contribution
obtained from one prime by replacing $E_u$ with $E_\ell$. Since the terms coming from $E_1$,
$\Im B_2$, and $\Im B_3$ are odd in $\phi$, whereas $\Re B_1$ and $\Re B_2$
are even in $\phi$, one has
\[
        q_\ell(q,\phi)
        =
        -q_u(q,-\phi).
\]
Therefore
\[
        \inf_{\phi\in[0,2\pi]}q_\ell(q,\phi)
        =
        -
        \sup_{\phi\in[0,2\pi]}q_u(q,\phi).
\]
The same summation argument gives
\eqref{eq:artin-euler-lower-four-param}.
\end{proof}
\begin{lemma}\label{cor:dedekind-euler-four-param}
With the notation of Lemma \ref{lem:four-param-euler-contribution}, one has, for the trivial character,
\[
        E_u^K(\zeta_K;T,d)
        \le
        n_K\,
        \mathcal C_E(d,a_1,a_2,a_3,a_4),
\]
and
\[
        E_\ell^K(\zeta_K;T,d)
        \ge
        -n_K\,
        \mathcal C_E(d,a_1,a_2,a_3,a_4).
\]
\end{lemma}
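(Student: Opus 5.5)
The plan is to repeat the proof of Lemma~\ref{lem:four-param-euler-contribution} with $\chi=\chi_0$, adjusting for the different normalization. For $\Re s>1$, write
\[
\zeta_K(s)=\prod_{\mathfrak p}(1-\mathrm N\mathfrak p^{-s})^{-1}.
\]
Each local factor is of the form $F_z(s)=(1-zq^{-s})^{-1}$ with $z=1$ and $q=\mathrm N\mathfrak p$. Since $\sigma_1,\sigma_2>1$, every quantity entering $E_1,\dots,E_4$ (namely $\Im\log$, $D_F$, $D_F'$, $D_F''$ at $s_0$ and at $\tfrac12+2d+iT$) is given by an absolutely convergent sum over $\mathfrak p$. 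Because the operators are additive over products, this gives
\[
E^K_u(\zeta_K;T,d)=\sum_{\mathfrak p}E^K_u(F_1^{(\mathfrak p)};T,d),
\]
and the same for $E^K_\ell$. For the logarithmic term in $E_1$, I will take the branch of $\log\zeta_K$ given by the Euler product, as in Corollary~\ref{cor:artin-log-derivative-bounds}. This is consistent with the continuous branch used in the contour argument, since it is the branch real and tending to $0$ as $\sigma\to\infty$.

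Next I compute a single local factor with $z=1$, setting $\phi=-T\log q$. The identities in the proof of Lemma~\ref{lem:four-param-euler-contribution} give
\[
E_u(F_1;T,d)=\tfrac12 q_u(q,\phi),
\]
so that
\[
E^K_u(F_1;T,d)=2E_u(F_1;T,d)=q_u(q,-T\log q)\le \sup_{\phi}q_u(q,\phi).
\]
The point to check is that the normalization $E^K_j=2E_j$ exactly compensates for having a single function $\zeta_K$ here, instead of the pair $L_\chi L_{\bar\chi}$ whose combined local contribution was $q_u$. With this, each prime ideal $\mathfrak p$ contributes at most $\max\{0,\sup_{m,\phi}q_u(p^m,\phi)\}$, where $p$ is the rational prime below $\mathfrak p$ and $\mathrm N\mathfrak p=p^m$. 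No maximum principle is needed, because $z=1$ already lies on the unit circle.

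The summation step is then as before. There are at most $n_K$ prime ideals above each rational prime $p$, and each contributes at most the nonnegative quantity $\max\{0,\sup_{m,\phi} q_u(p^m,\phi)\}$. Summing over $p$ gives
\[
E^K_u(\zeta_K;T,d)\le n_K\,\mathcal C_E(d,a_1,a_2,a_3,a_4).
\]
Truncating at $0$ is harmless, because we only need an upper bound.

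For the lower bound, I will use the parity observation from the proof of Lemma~\ref{lem:four-param-euler-contribution}: the local $E_\ell$-contribution equals $q_\ell(q,\phi)=-q_u(q,-\phi)$. Hence each local term is at least $-\sup_\phi q_u(q,\phi)\ge -\max\{0,\sup_{m,\phi}q_u(p^m,\phi)\}$, and the same counting yields $-n_K\mathcal C_E$.

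The only real obstacle is the bookkeeping of the factor $2$ together with the branch of $\Im\log$ in $E_1^K$; everything else transfers verbatim from the non-trivial case.
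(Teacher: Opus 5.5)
Your proposal is correct and follows the same route as the paper, whose proof simply applies the local majorization from Lemma~\ref{lem:four-param-euler-contribution} to the Euler product of $\zeta_K$ with the normalization $E_j^K=2E_j$. Your write-up makes explicit the details the paper leaves implicit: the factor $2$ turning $\tfrac12 q_u$ into $q_u$, the fact that $z=1$ already lies on the unit circle, the parity relation $q_\ell(q,\phi)=-q_u(q,-\phi)$, the Euler-product branch of $\log\zeta_K$, and the count of at most $n_K$ prime ideals above each rational prime $p$.
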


\begin{proof}
One applies the same majorization used in the proof of Lemma \ref{lem:four-param-euler-contribution} to the Euler product of $\zeta_K(s)$, using the normalization $E_j^K=2E_j,$ for $j=1,2,3,4.$
\end{proof}
For completeness, we also mention two results which could be used instead of Lemma \ref{lem:four-param-euler-contribution} and Lemma \ref{cor:dedekind-euler-four-param}. The proofs of Lemma \ref{lem:artin-euler-term} and Lemma \ref{lem:dedekind-euler-term} are much simpler, but they lead to less sharp estimates.
\begin{lemma}\label{lem:artin-euler-term}
Let $\chi$ be non-trivial. For $\star\in\{u,\ell\}$, the Euler-product
contribution satisfies
\[
        E_u(L_\chi;T,d)+E_u(L_{\bar{\chi}};T,d)
        \le
        m_\chi \mathcal R_L(d,a_1,a_2,a_3,a_4),
\]
and
\[
        E_\ell(L_\chi;T,d)+E_\ell(L_{\bar{\chi}};T,d)
        \ge
        -m_\chi \mathcal R_L(d,a_1,a_2,a_3,a_4),
\]
where
\begin{align}\label{eq:RL-def}
\begin{split}
&\mathcal R_L(d,a_1,a_2,a_3,a_4)\\
&:=
\frac{2}{\pi}
\left(
2\log\zeta(\sigma_1)+\log\zeta(\sigma_2)
\right)
+
\frac{d a_1}{2}\mathcal L_1(\sigma_1) +
\frac{d^2|a_2|+|a_3|}{2}\mathcal L_2(\sigma_1)
+
\frac{|d^3a_4|}{4}\mathcal L_3(\sigma_1).
\end{split}
\end{align}
\end{lemma}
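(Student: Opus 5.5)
We bound each of the four operators separately, directly from their definitions. We do not use the local harmonic-function argument of Lemma~\ref{lem:four-param-euler-contribution}; instead we use the crude uniform bounds of Lemma~\ref{bd-Artin-L} and Corollary~\ref{cor:artin-log-derivative-bounds}. Throughout, $\sigma_1=\frac12+d>1$ and $\sigma_2=\frac12+2d>1$ because $d>1/2$, so every evaluation point lies in the region of absolute convergence. It suffices to prove the upper bound for $E_u$. The bound for $E_\ell$ follows from the same estimates, since $E_\ell$ differs from $E_u$ only by the sign of $E_2$, and each estimate below bounds an absolute value. In every case we bound the $\chi$ term and the $\bar\chi$ term separately and add them.

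\textbf{The term $E_1$.} By definition,
\[
E_1(L_\chi;T,d)=\pi^{-1}\bigl(2\Im\log L_\chi(\sigma_1+iT)-\Im\log L_\chi(\sigma_2+iT)\bigr).
\]
Here $\Im\log$ is the branch coming from the Euler product. This is the branch fixed in \eqref{eq:E1-def}: in the contour argument, $\log\Xi_\chi$ is continued from $+\infty$ along horizontal lines, so on $\Re s>1$ it agrees with the Dirichlet-series branch, up to the real contributions of $A(\chi)^s$ and $\gamma_\chi$, which are treated separately. Corollary~\ref{cor:artin-log-derivative-bounds} gives $|\Im\log L_\chi(\sigma+iT)|\le m_\chi\log\zeta(\sigma)$, and likewise for $\bar\chi$. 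Summing the $\chi$ and $\bar\chi$ contributions therefore gives at most $\frac{2}{\pi}m_\chi(2\log\zeta(\sigma_1)+\log\zeta(\sigma_2))$.

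\textbf{The terms $E_2$, $E_3$, $E_4$.} Apply the derivative bounds of Corollary~\ref{cor:artin-log-derivative-bounds} at $s_0=\sigma_1+iT$:
\begin{itemize}
\item $|D_{L_\chi}(s_0)|\le m_\chi\mathcal L_1(\sigma_1)$ (case $k=0$);
\item $|D'_{L_\chi}(s_0)|\le m_\chi\mathcal L_2(\sigma_1)$ (case $k=1$);
\item $|D''_{L_\chi}(s_0)|\le m_\chi\mathcal L_3(\sigma_1)$ (case $k=2$).
\end{itemize}
Using $|\Re w|,|\Im w|\le|w|$ and summing over $\chi,\bar\chi$:
\begin{itemize}
\item $E_2$ contributes at most $\frac{da_1}{2}m_\chi\mathcal L_1(\sigma_1)+\frac{d^2|a_2|}{2}m_\chi\mathcal L_2(\sigma_1)$;
\item $E_3$ contributes at most $\frac{|a_3|}{2}m_\chi\mathcal L_2(\sigma_1)$;
\item $E_4$ contributes at most $\frac{|d^3a_4|}{4}m_\chi\mathcal L_3(\sigma_1)$.
\end{itemize}
The $a_1$ term is written without absolute value; this is harmless for admissible tuples with $a_1\ge0$. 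Otherwise one simply replaces $a_1$ by $|a_1|$.

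\textbf{Conclusion.} Adding the four contributions gives exactly $m_\chi\mathcal R_L$, which proves the lemma. There is no real obstacle. The only point needing care is the branch of $\Im\log$ in $E_1$: one must check that the branch produced by the contour argument coincides with the Euler-product branch, which holds because $\sigma_1,\sigma_2>1$ and continuation is from the right. The factor conventions in $E_4$ ($d^3a_4/8$ per function, two functions) are straightforward bookkeeping.
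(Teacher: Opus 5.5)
Your proposal is correct and is essentially the paper's proof. You apply the uniform bounds of Corollary~\ref{cor:artin-log-derivative-bounds} at $\sigma_1$ and $\sigma_2$ to each of $E_1,\dots,E_4$, for $L_\chi$ and $L_{\bar\chi}$ separately, and add; your bookkeeping reproduces $m_\chi\mathcal R_L$ exactly, and $E_\ell$ is covered because every bound is on an absolute value. Your remark on the branch of $\Im\log$ in $E_1$ spells out what the paper leaves implicit. Your hedge about the sign of $a_1$ is unnecessary: admissibility at $b=0$ as $t\to0^+$ and as $t\to\infty$ forces $a_1+a_2\ge2$ and $a_1\ge a_2$, hence $a_1\ge1$.
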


\begin{proof}
For $\sigma>1$, Corollary \ref{cor:artin-log-derivative-bounds} implies
\[ |\log L(s,\chi,L/K)|\le m_\chi\log\zeta(\sigma)
\quad \text{and}
\quad
\left| \frac{d^k}{dx^k} \left(\frac{L'}{L}\right)(s,\chi,L/K) \right| \le  m_\chi\mathcal L_{k+1}(\sigma)
\]
for $k\ge 0$. The same bounds hold for $L_{\bar{\chi}}$. Applying these estimates at
$\sigma_1=1/2+d$ and $\sigma_2=1/2+2d$ to the definitions of
$E_1,E_2,E_3,E_4$ gives the result.
\end{proof}
\begin{lemma}\label{lem:dedekind-euler-term}
For the trivial character, one has
\[ E_u^K(\zeta_K;T,d)   \le  n_K \mathcal R_L(d,a_1,a_2,a_3,a_4),
\]
and
\[ E_\ell^K(\zeta_K;T,d)  \ge  -n_K \mathcal R_L(d,a_1,a_2,a_3,a_4).
\]
\end{lemma}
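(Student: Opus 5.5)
The plan is to mirror the proof of Lemma~\ref{lem:artin-euler-term}, with $\zeta_K$ in place of the pair $L_\chi, L_{\bar\chi}$ and with the normalization $E_j^K=2E_j$ absorbing the factor $2$ that previously came from having two $L$-functions.

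\emph{Pointwise bounds.} By Lemma~\ref{bd-Artin-L} and Corollary~\ref{cor:artin-log-derivative-bounds} with $\chi=\chi_0$ and $\chi_0(1)=1$, we have, for $\sigma>1$,
\[
|\log\zeta_K(s)|\le n_K\log\zeta(\sigma), \qquad \left|\frac{d^k}{ds^k}\frac{\zeta_K'}{\zeta_K}(s)\right|\le n_K\,\mathcal L_{k+1}(\sigma) \quad (k=0,1,2).
\]
Here the branch of the logarithm is the one given by the Euler product. It agrees with the continuous branch used in $E_1$, since on $\Re s>1$ the argument is obtained by continuous variation from $+\infty$, where $\zeta_K\to1$.

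\emph{Evaluate each operator at $s_0=\sigma_1+iT$ and at $\sigma_2+iT$.}
\begin{itemize}
\item $E_1^K(\zeta_K)=\frac{2}{\pi}\bigl(2\Im\log\zeta_K(\sigma_1+iT)-\Im\log\zeta_K(\sigma_2+iT)\bigr)$ has absolute value at most $\frac{2}{\pi}n_K\bigl(2\log\zeta(\sigma_1)+\log\zeta(\sigma_2)\bigr)$.
\item $\pm E_2^K$ is bounded by $\frac{da_1}{2}n_K\mathcal L_1(\sigma_1)+\frac{d^2|a_2|}{2}n_K\mathcal L_2(\sigma_1)$. This requires $a_1\ge0$, which holds for admissible tuples, since otherwise $a_1$ would appear with an absolute value in $\mathcal R_L$.
\item $|E_3^K|\le\frac{|a_3|}{2}n_K\mathcal L_2(\sigma_1)$.
\item $|E_4^K|\le\frac{|d^3a_4|}{4}n_K\mathcal L_3(\sigma_1)$.
\end{itemize}
Summing these bounds gives $E_u^K(\zeta_K)\le n_K\mathcal R_L$ and $E_\ell^K(\zeta_K)\ge-n_K\mathcal R_L$, because each of $E_u^K$ and $E_\ell^K$ is a signed sum of these four terms.

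\emph{Main obstacle: bookkeeping.} There is no real analytic difficulty; the work is confirming that the constants match $\mathcal R_L$. For $E_1$, this is immediate from the definition of $\mathcal R_L$. For the terms in $D_F$, $D_F'$ and $D_F''$ the constants must be checked carefully. In the Artin case, two functions each contributing $\frac{da_1}{4}$ produce $\frac{da_1}{2}$, and $\frac{d^3a_4}{8}\cdot2=\frac{d^3a_4}{4}$. In the present case a single function is multiplied by the factor $2$ from $E_j^K$. This gives exactly the same constants, so the bound holds with $n_K$ in place of $m_\chi$.
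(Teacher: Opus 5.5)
Your proposal is correct and follows essentially the same route as the paper, which simply applies the uniform bounds of Corollary~\ref{cor:artin-log-derivative-bounds} (with $\chi_0(1)=1$) at $\sigma_1+iT$ and $\sigma_2+iT$ to $E_1,\dots,E_4$, and lets the normalization $E_j^K=2E_j$ reproduce the constants in $\mathcal R_L$. Your side remarks make explicit two points the paper leaves implicit: the branch of $\log\zeta_K$, and the sign of $a_1$, which admissibility forces to be positive because the limits $t\to0$ and $|t|\to\infty$ give $a_1+a_2\ge2$ and $a_1\ge a_2$.
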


\begin{proof}
The proof is the same as that of Lemma \ref{lem:artin-euler-term}, with
$\chi=\chi_0$ and $L(s,\chi,L/K)=\zeta_K(s)$, using the normalization
$E_j^K=2E_j$.
\end{proof}
\subsection{The gamma-factor contribution}
For $\varepsilon\in\{0,1\}$, let
\[
        \Gamma_\varepsilon(s)
        :=
        \pi^{-(s+\varepsilon)/2}
        \Gamma\left(\frac{s+\varepsilon}{2}\right).
\]
Thus, \eqref{def:gammachi} becomes
\[
        \gamma_\chi(s) = \Gamma_0(s)^{c_0(\chi)} \Gamma_1(s)^{c_1(\chi)}.
\]
Define
\[
        \mathfrak E_j(F;T,d):=2E_j(F;T,d),
        \qquad j=1,2,3,4,
\]
and
\[
        \mathfrak E_u:=\mathfrak E_1+\mathfrak E_2+\mathfrak E_3+\mathfrak E_4,
        \qquad
        \mathfrak E_\ell:=\mathfrak E_1-\mathfrak E_2+\mathfrak E_3+\mathfrak E_4.
\]
For $\varepsilon\in\{0,1\}$, define
\begin{align}\label{eq:gamma-remainder-upper-a4}
\mathcal G_{\varepsilon,u}(T;d,a_1,a_2,a_3,a_4)
&:=
\mathfrak E_u(\Gamma_\varepsilon;T,d)
-
\frac{T}{\pi}\log\left(\frac{T}{2\pi e}\right)
-
\frac{d a_1}{4}\log\left(\frac{T}{2\pi}\right),
\end{align}
and
\begin{align}\label{eq:gamma-remainder-lower-a4}
\mathcal G_{\varepsilon,\ell}(T;d,a_1,a_2,a_3,a_4)
&:=
\mathfrak E_\ell(\Gamma_\varepsilon;T,d)
-
\frac{T}{\pi}\log\left(\frac{T}{2\pi e}\right)
+
\frac{d a_1}{4}\log\left(\frac{T}{2\pi}\right).
\end{align}
Finally set
\[
        \mathcal G_u(d,a_1,a_2,a_3,a_4)
        :=
        \max_{\varepsilon\in\{0,1\}}
        \sup_{T\ge 1}
        \mathcal G_{\varepsilon,u}(T;d,a_1,a_2,a_3,a_4),
\]
and
\[
        \mathcal G_\ell(d,a_1,a_2,a_3,a_4)
        :=
        \min_{\varepsilon\in\{0,1\}}
        \inf_{T\ge 1}
        \mathcal G_{\varepsilon,\ell}(T;d,a_1,a_2,a_3,a_4).
\]
\begin{remark}
The quantities $\mathcal G_u$ and $\mathcal G_\ell$ are introduced here to keep the dependence on $d,a_1,a_2,a_3,a_4$ explicit, as these parameters will be optimized later. They play the same role as the uniform numerical bounds for the remainder terms
$U_{j,1}(T)+U_{j,2}(T)$ and $L_{j,1}(T)+L_{j,2}(T)$ in the proof of \cite[Lemma 3.7]{amberger2026estimatingnumberzerosdedekind}.
\end{remark}
\begin{lemma}\label{lem:gamma-artin}
For a non-trivial irreducible character $\chi$, one has
\begin{align}\label{eq:gamma-artin-upper-a4}
2E_u(\gamma_\chi;T,d)
&\le
m_\chi
\left[
\frac{T}{\pi}\log\left(\frac{T}{2\pi e}\right)
+
\frac{d a_1}{4}\log\left(\frac{T}{2\pi}\right)
+
\mathcal G_u(d,a_1,a_2,a_3,a_4)
\right],
\end{align}
and
\begin{align}\label{eq:gamma-artin-lower-a4}
2E_\ell(\gamma_\chi;T,d)
&\ge
m_\chi
\left[
\frac{T}{\pi}\log\left(\frac{T}{2\pi e}\right)
-
\frac{d a_1}{4}\log\left(\frac{T}{2\pi}\right)
+
\mathcal G_\ell(d,a_1,a_2,a_3,a_4)
\right].
\end{align}
\end{lemma}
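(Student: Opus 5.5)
The plan is to reduce the statement to the definitions of $\mathcal G_u$ and $\mathcal G_\ell$ using additivity of the operators. Since $\gamma_\chi(s)=\Gamma_0(s)^{c_0(\chi)}\Gamma_1(s)^{c_1(\chi)}$ and each $E_j$ is additive over products (hence multiplicative powers become integer multiples), we get
\[
2E_\star(\gamma_\chi;T,d)=c_0(\chi)\,\mathfrak E_\star(\Gamma_0;T,d)+c_1(\chi)\,\mathfrak E_\star(\Gamma_1;T,d),\qquad \star\in\{u,\ell\},
\]
because $\mathfrak E_j=2E_j$ by definition. Here one has to check that $E_1$, which involves $\Im\log$, is additive. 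We fix the branch of $\log\Gamma_\varepsilon$ by continuous variation from the real axis, consistently with the contour argument. Then $\Im\log$ of a product is the sum of the $\Im\log$'s. The derivative operators $E_2,E_3,E_4$ are plainly additive.

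Next, for each $\varepsilon\in\{0,1\}$ and $T\ge1$, the definition of $\mathcal G_{\varepsilon,u}$ gives the identity
\[
\mathfrak E_u(\Gamma_\varepsilon;T,d)=\frac{T}{\pi}\log\Big(\frac{T}{2\pi e}\Big)+\frac{da_1}{4}\log\Big(\frac{T}{2\pi}\Big)+\mathcal G_{\varepsilon,u}(T;d,a_1,a_2,a_3,a_4).
\]
The last term is at most $\mathcal G_u(d,a_1,a_2,a_3,a_4)$ by taking the maximum over $\varepsilon$ and the supremum over $T\ge1$. We multiply by $c_\varepsilon(\chi)\ge0$ and sum. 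Since $c_0(\chi)+c_1(\chi)=m_\chi$, the main terms combine to $m_\chi$ times the bracket, which is \eqref{eq:gamma-artin-upper-a4}. The lower bound \eqref{eq:gamma-artin-lower-a4} follows in the same way from the definition of $\mathcal G_{\varepsilon,\ell}$ and $\mathcal G_{\varepsilon,\ell}\ge\mathcal G_\ell$. The nonnegativity of the $c_\varepsilon(\chi)$ is exactly what makes the inequalities go the right way.

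The only genuine subtleties are, first, the consistency of the branch of $\Im\log\Gamma_\varepsilon$ in $E_1$ with the branch used in \eqref{eq:E1-def}, and second, that $\mathcal G_u$ and $\mathcal G_\ell$ are finite so the statement is not vacuous. Finiteness would follow from Stirling's formula with explicit error terms. Expanding $\log\Gamma((s+\varepsilon)/2)$ and its first three derivatives at $s_0=\tfrac12+d+iT$ and $\tfrac12+2d+iT$ shows that the combination $\frac{2}{\pi}(2\Im\log\Gamma_\varepsilon(s_0)-\Im\log\Gamma_\varepsilon(s_0+d))$ equals $\frac{T}{\pi}\log\frac{T}{2\pi e}$ up to an $O(1)$ term. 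The same expansion shows that $\frac{da_1}{2}\Re\frac{\Gamma_\varepsilon'}{\Gamma_\varepsilon}(s_0)=\frac{da_1}{4}\log\frac{T}{2\pi}+O(1/T)$, and that the $a_2,a_3,a_4$ terms are $O(1/T)$. So $\mathcal G_{\varepsilon,\star}(T)$ is bounded for $T\ge1$; the numerical values are determined later. I expect this Stirling bookkeeping to be the main effort, though it is not needed for the logical validity of the lemma itself, which is essentially formal.
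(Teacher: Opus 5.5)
Your proposal is correct and follows essentially the same route as the paper. You use additivity to get $2E_\star(\gamma_\chi;T,d)=c_0(\chi)\mathfrak E_\star(\Gamma_0;T,d)+c_1(\chi)\mathfrak E_\star(\Gamma_1;T,d)$, then apply the definitions of $\mathcal G_u,\mathcal G_\ell$ together with $c_\varepsilon(\chi)\ge 0$ and $c_0(\chi)+c_1(\chi)=m_\chi$. Your extra remarks on the branch of $\Im\log$ and on the finiteness of $\mathcal G_u,\mathcal G_\ell$ via Stirling are correct and are care the paper leaves implicit.
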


\begin{proof}
Since
\[
        \gamma_\chi(s)
        =
        \Gamma_0(s)^{c_0(\chi)}
        \Gamma_1(s)^{c_1(\chi)}
\]
and $c_0(\chi)+c_1(\chi)=m_\chi$, additivity gives
\[
        2E_\star(\gamma_\chi;T,d)
        =
        c_0(\chi)\mathfrak E_\star(\Gamma_0;T,d)
        +
        c_1(\chi)\mathfrak E_\star(\Gamma_1;T,d),
        \quad\text{for } 
        \star\in\{u,\ell\}.
\]
The result follows from the definitions of
$\mathcal G_u$ and $\mathcal G_\ell$.
\end{proof}

\begin{lemma}\label{lem:gamma-trivial}
For the trivial character, one has
\begin{align}\label{eq:gamma-trivial-upper}
E_u^K(\gamma_K;T,d)
&\le
n_K
\left[
\frac{T}{\pi}\log\left(\frac{T}{2\pi e}\right)
+
\frac{d a_1}{4}\log\left(\frac{T}{2\pi}\right)
+
\mathcal G_u(d,a_1,a_2,a_3,a_4)
\right],
\end{align}
and
\begin{align}\label{eq:gamma-trivial-lower}
E_\ell^K(\gamma_K;T,d)
&\ge
n_K
\left[
\frac{T}{\pi}\log\left(\frac{T}{2\pi e}\right)
-
\frac{d a_1}{4}\log\left(\frac{T}{2\pi}\right)
+
\mathcal G_\ell(d,a_1,a_2,a_3,a_4)
\right].
\end{align}
\end{lemma}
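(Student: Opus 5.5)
The plan is to argue exactly as in the proof of Lemma~\ref{lem:gamma-artin}, replacing $\gamma_\chi$ by $\gamma_K$ and the normalization $2E_\star$ by $E_\star^K$.

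First, rewrite $\gamma_K$ in terms of the factors $\Gamma_\varepsilon$. By definition,
\[
\gamma_K(s)=\Gamma_0(s)^{r_1+r_2}\,\Gamma_1(s)^{r_2},
\]
so we are in the situation of \eqref{def:gammachi} with $c_0=r_1+r_2$ and $c_1=r_2$. These exponents satisfy $c_0+c_1=r_1+2r_2=n_K$.

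Next, use additivity. The operators $E_j$ are additive over products, and by definition $E_j^K=2E_j=\mathfrak E_j$ for $j=1,2,3,4$. Hence, for $\star\in\{u,\ell\}$,
\[
E_\star^K(\gamma_K;T,d)=(r_1+r_2)\,\mathfrak E_\star(\Gamma_0;T,d)+r_2\,\mathfrak E_\star(\Gamma_1;T,d).
\]
The sign pattern (plus or minus $E_2$) is identical in the definitions of $E^K_\star$ and $\mathfrak E_\star$, so this identity holds for both the upper and lower operators.

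Then apply the definitions of $\mathcal G_{\varepsilon,u}$ and $\mathcal G_u$. For every $T\ge1$ and each $\varepsilon\in\{0,1\}$,
\[
\mathfrak E_u(\Gamma_\varepsilon;T,d)=\frac{T}{\pi}\log\Big(\frac{T}{2\pi e}\Big)+\frac{da_1}{4}\log\Big(\frac{T}{2\pi}\Big)+\mathcal G_{\varepsilon,u}(T)\le \frac{T}{\pi}\log\Big(\frac{T}{2\pi e}\Big)+\frac{da_1}{4}\log\Big(\frac{T}{2\pi}\Big)+\mathcal G_u .
\]
In the same way,
\[
\mathfrak E_\ell(\Gamma_\varepsilon;T,d)\ge \frac{T}{\pi}\log\Big(\frac{T}{2\pi e}\Big)-\frac{da_1}{4}\log\Big(\frac{T}{2\pi}\Big)+\mathcal G_\ell .
\]
The coefficients $r_1+r_2$ and $r_2$ are nonnegative, so multiplying these bounds by them preserves the inequalities. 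Summing and using $(r_1+r_2)+r_2=n_K$ gives \eqref{eq:gamma-trivial-upper} and \eqref{eq:gamma-trivial-lower}.

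No step is a genuine obstacle; the argument is bookkeeping. The only points needing care are the following.
\begin{itemize}
\item Confirm that the normalization $E^K_j=2E_j$ matches $\mathfrak E_j=2E_j$ exactly, so that no extra factor of $2$ or $1/2$ appears.
\item Confirm that $\mathcal G_u$ and $\mathcal G_\ell$ are finite. Otherwise the stated inequalities are vacuous but still formally valid.
\end{itemize}
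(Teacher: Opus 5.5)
Your proposal is correct and is essentially the paper's own argument. The paper writes $\gamma_K=\Gamma_0^{r_1+r_2}\Gamma_1^{r_2}$ with $(r_1+r_2)+r_2=n_K$ and declares the proof identical to that of Lemma~\ref{lem:gamma-artin}, which is exactly the additivity-plus-definitions bookkeeping you carry out, including the check that $E_j^K=2E_j=\mathfrak E_j$.
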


\begin{proof}
For the Dedekind zeta-function,
\[
        \gamma_K(s)
        =
        \Gamma_0(s)^{r_1+r_2}
        \Gamma_1(s)^{r_2},
\]
and $(r_1+r_2)+r_2=n_K$. The proof is therefore identical to that of
Lemma \ref{lem:gamma-artin}.
\end{proof}

\begin{remark}  (i)
One may ask whether the gamma-factor estimates used in
\cite[Section\,3]{BMOR20} could improve the final constants in
Theorems \ref{main-thm} and \ref{Dedekind:thm}. Such an improvement would only affect the constants \(\mathcal G_u\) and \(\mathcal G_\ell\) appearing in Lemmas \ref{lem:gamma-artin} and \ref{lem:gamma-trivial}, and not the constant \(\mathcal C_E\). Since \eqref{const:C1C2} yields
\[
        C_2
        =
        \max\{\mathcal C_E+\mathcal G_u,\,
        \mathcal C_E-\mathcal G_\ell\},
\]
any improvement coming only from the gamma factor is limited by the size of
\(\mathcal C_E\). In particular, for the admissible tuple \eqref{eq:tuple-small-C1} used in this paper, we have $  \mathcal C_E\le 9.168629168235.$ Thus even in the ideal situation where
\(\mathcal G_u=\mathcal G_\ell=0\), one would still obtain $C_2\approx \mathcal C_E\approx 9.169,$ which remains slightly larger than the value \(C_2=8.161\) found in \cite{amberger2026estimatingnumberzerosdedekind}.\\
\noindent (ii) For completeness, we also tested the gamma-factor contribution using the
approach of \cite[Section\,3]{BMOR20}. This gives essentially the same
bounds as those obtained from Lemmas \ref{lem:gamma-artin} and
\ref{lem:gamma-trivial}; see Table
\ref{tab:certified-intermediate-constants}. More precisely, we obtain $ \mathcal G_u\le 0.315174,$ and $\mathcal G_\ell\ge -0.250001.$
Therefore, at least for the present choice of parameters, the estimates used to bound the gamma factor in \cite[Section\,3]{BMOR20} do not appear to lead to a noticeable improvement in
the final constant \(C_2\).
\end{remark}
\subsection{Combining the estimates}
For a non-trivial irreducible character $\chi$, define
\[
        M_\chi(T)
        :=
        \frac{T}{\pi}
        \log\left(
        A(\chi)
        \left(\frac{T}{2\pi e}\right)^{m_\chi}
        \right),
        \qquad
        m_\chi=n_K\chi(1).
\]

\begin{proposition}\label{prop:nontrivial-artin-two-sided}
Let $\chi$ be non-trivial and assume Artin's holomorphy conjecture for
$\chi$. Let $(d,a_1,a_2,a_3,a_4)$ be admissible. Then, for $T\ge1$,
\[
N(T,\chi)
\le
M_\chi(T)
+
\frac{d a_1}{4}
\left(
\log A(\chi)
+
m_\chi\log\left(\frac{T}{2\pi}\right)
\right)
+
m_\chi
\left(
\mathcal G_u+\mathcal C_E
\right),
\]
and
\[
N(T,\chi)
\ge
M_\chi(T)
-
\frac{d a_1}{4}
\left(
\log A(\chi)
+
m_\chi\log\left(\frac{T}{2\pi}\right)
\right)
+
m_\chi
\left(
\mathcal G_\ell-\mathcal C_E
\right).
\]
Here, 
\[
        \mathcal C_E=\mathcal C_E(d,a_1,a_2,a_3,a_4),
        \quad
        \mathcal G_\star=\mathcal G_\star(d,a_1,a_2,a_3,a_4),\quad\text{for }   \star\in\{u,\ell\},
\]
and $\mathcal G_\ell$ is taken with its sign, as in the lower gamma-factor estimate.
\end{proposition}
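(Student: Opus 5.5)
The plan is to assemble the bounds already proved, starting from the sandwich \eqref{eq:Eu-El-bound-a4}, $E_\ell(\Xi_\chi;T,d)\le N(T,\chi)\le E_u(\Xi_\chi;T,d)$. This is valid because $(d,a_1,a_2,a_3,a_4)$ is admissible (Proposition~\ref{prop:four-param-zero-sum} combined with \eqref{eq:Nchi-f-sum}). Under Artin's conjecture, $\chi$ non-trivial means $\delta(\chi)=0$. The additive decomposition \eqref{eq:Estar-Xi-decomposition} then splits each of $E_u(\Xi_\chi;T,d)$ and $E_\ell(\Xi_\chi;T,d)$ into three pieces: the conductor piece $E_\star(A(\chi)^s)$, the gamma piece $2E_\star(\gamma_\chi)$, and the Euler-product piece $E_\star(L_\chi)+E_\star(L_{\bar\chi})$. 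Each piece has already been estimated.

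For the upper bound, I would substitute the three available estimates. Lemma~\ref{lem:conductor-term-artin} gives exactly $\frac{T}{\pi}\log A(\chi)+\frac{da_1}{4}\log A(\chi)$. Lemma~\ref{lem:gamma-artin}, equation \eqref{eq:gamma-artin-upper-a4}, gives at most
\[
m_\chi\Big[\tfrac{T}{\pi}\log\big(\tfrac{T}{2\pi e}\big)+\tfrac{da_1}{4}\log\big(\tfrac{T}{2\pi}\big)+\mathcal G_u\Big].
\]
Lemma~\ref{lem:four-param-euler-contribution}, equation \eqref{eq:artin-euler-upper-four-param}, gives at most $m_\chi\mathcal C_E$. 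Summing these, the terms $\frac{T}{\pi}\log A(\chi)$ and $m_\chi\frac{T}{\pi}\log\frac{T}{2\pi e}$ combine to $M_\chi(T)$. The $a_1$-terms combine to $\frac{da_1}{4}\big(\log A(\chi)+m_\chi\log\frac{T}{2\pi}\big)$. The constant is $m_\chi(\mathcal G_u+\mathcal C_E)$, which is the claimed upper bound.

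The lower bound is symmetric. I would use the $E_\ell$ version of Lemma~\ref{lem:conductor-term-artin}, namely $\big(\frac{T}{\pi}-\frac{da_1}{4}\big)\log A(\chi)$, together with \eqref{eq:gamma-artin-lower-a4} and \eqref{eq:artin-euler-lower-four-param}. The constants now collect to $m_\chi(\mathcal G_\ell-\mathcal C_E)$, with $\mathcal G_\ell$ kept with its sign.

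I do not expect a genuine obstacle here; the proof is bookkeeping. The only points needing care are these. First, the hypothesis $T\ge1$ is needed so that the suprema and infima defining $\mathcal G_u$ and $\mathcal G_\ell$ apply. Second, the case where $T$ is the ordinate of a zero of $L_\chi$ or $L_{\bar\chi}$ must be handled by the usual limiting argument mentioned earlier, since the contour identity was derived only for $T$ avoiding such ordinates. Third, the normalizations must match: the factor $2$ in $2E_\star(\gamma_\chi)$ corresponds to $\gamma_\chi^2$ in $\Xi_\chi$, and the operators are normalized to the half-count $N(T,\chi)$. No double counting arises because each of the three lemmas was stated in exactly this normalization.
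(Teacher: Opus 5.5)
Your proposal is correct and is the paper's own proof: you start from the sandwich \eqref{eq:Eu-El-bound-a4}, split it via \eqref{eq:Estar-Xi-decomposition}, insert Lemmas~\ref{lem:conductor-term-artin}, \ref{lem:gamma-artin}, and \ref{lem:four-param-euler-contribution}, and collect terms exactly as you describe. One minor wording point: $\delta(\chi)=0$ holds by definition for non-trivial $\chi$, whereas what Artin's conjecture supplies is that $\xi(s,\chi)$ and $\xi(s,\bar\chi)$ are entire of order one, which is what the sandwich rests on.
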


\begin{proof}
By the decomposition
\[
E_\star(\Xi_\chi;T,d)
=
E_\star(A(\chi)^s;T,d)
+
2E_\star(\gamma_\chi;T,d)
+
E_\star(L_\chi;T,d)
+
E_\star(L_{\bar{\chi}};T,d),
\]
for $\star\in\{u,\ell\}$, the result follows from
\[
        E_\ell(\Xi_\chi;T,d)
        \le
        N(T,\chi)
        \le
        E_u(\Xi_\chi;T,d),
\]
together with Lemmas \ref{lem:conductor-term-artin},
\ref{lem:four-param-euler-contribution}, and \ref{lem:gamma-artin}.
\end{proof}
For the trivial character, we define
\[
        M_K(T)
        :=
        \frac{T}{\pi}
        \log\left(
        d_K
        \left(\frac{T}{2\pi e}\right)^{n_K}
        \right).
\]

\begin{proposition}\label{prop:trivial-two-sided}
For the trivial character $\chi=\chi_0$, one has, for $T\ge1$,
\[
N_K(T)
\le
M_K(T)
+
\frac{d a_1}{4}
\left(
\log d_K
+
n_K\log\left(\frac{T}{2\pi}\right)
\right)
+
n_K
\left(
\mathcal G_u+\mathcal C_E
\right)
+
E_u^K(s(s-1);T,d),
\]
and
\[
N_K(T)
\ge
M_K(T)
-
\frac{d a_1}{4}
\left(
\log d_K
+
n_K\log\left(\frac{T}{2\pi}\right)
\right)
+
n_K
\left(
\mathcal G_\ell-\mathcal C_E
\right)
+
E_\ell^K(s(s-1);T,d).
\]
\end{proposition}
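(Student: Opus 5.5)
This is the trivial-character counterpart of Proposition~\ref{prop:nontrivial-artin-two-sided}, and the plan is to run the same bookkeeping with $\xi_K$ in place of $\Xi_\chi$. The starting point is the sandwich
\[
E_\ell^K(\xi_K;T,d)\le N_K(T)\le E_u^K(\xi_K;T,d),
\]
which comes from \eqref{eq:NK-f-sum} together with the analogue of Proposition~\ref{prop:four-param-zero-sum} for $\mathcal Z_K$. That analogue holds because $\mathcal Z_K$ is invariant under $\rho\mapsto 1-\bar\rho$, so the pointwise kernel inequalities \eqref{eq:four-param-kernel-upper} and \eqref{eq:four-param-kernel-lower} sum to the operators $E_j^K=2E_j$. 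The factor $2$ matches the $\frac1\pi$ normalization in \eqref{eq:NK-f-sum}, as opposed to the $\frac1{2\pi}$ in \eqref{eq:Nchi-f-sum}.

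Next, apply the additive decomposition \eqref{eq:Estar-trivial-decomposition} for $\star\in\{u,\ell\}$. This splits $E_\star^K(\xi_K;T,d)$ into four pieces, each of which has already been estimated:
\begin{itemize}
\item By Lemma~\ref{lem:conductor-term-trivial}, the discriminant piece $E_\star^K(d_K^{s/2};T,d)$ equals $\bigl(\frac T\pi\pm\frac{da_1}{4}\bigr)\log d_K$.
\item By Lemma~\ref{lem:gamma-trivial}, the gamma piece is bounded above (for $u$) or below (for $\ell$) by
\[
n_K\Bigl[\tfrac T\pi\log\tfrac{T}{2\pi e}\pm\tfrac{da_1}{4}\log\tfrac{T}{2\pi}+\mathcal G_\star\Bigr].
\]
\item By Lemma~\ref{cor:dedekind-euler-four-param}, the Euler-product piece satisfies $E_u^K(\zeta_K;T,d)\le n_K\mathcal C_E$ and $E_\ell^K(\zeta_K;T,d)\ge -n_K\mathcal C_E$.
\item The polar piece $E_\star^K(s(s-1);T,d)$ is simply kept as it stands; Lemma~\ref{lem:polar-factor-trivial} gives it explicitly.
\end{itemize}

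Summing the four pieces, the terms $\frac T\pi\log d_K$ and $n_K\frac T\pi\log\frac{T}{2\pi e}$ combine into $M_K(T)$. The $\pm\frac{da_1}{4}$ terms combine into $\pm\frac{da_1}{4}\bigl(\log d_K+n_K\log\frac{T}{2\pi}\bigr)$. The constants give $n_K(\mathcal G_u+\mathcal C_E)$ in the upper bound and $n_K(\mathcal G_\ell-\mathcal C_E)$ in the lower bound. This is exactly the statement.

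The main point needing care is that Lemma~\ref{cor:dedekind-euler-four-param} holds with the normalization $E_j^K=2E_j$ applied to the single function $\zeta_K$. In the non-trivial case, the factor $2$ came from pairing $L_\chi$ with $L_{\bar\chi}$. Here it comes from the doubling in $E^K$. In both cases each local factor contributes $2E_u(F_z)=q_u(q,\phi)$, so the same bound $n_K\mathcal C_E$ results. I also need to keep the signs in the lower bound consistent, with $\mathcal G_\ell$ taken with its sign. Beyond these checks the argument is mechanical.
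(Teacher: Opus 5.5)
Your proposal is correct and follows the paper's proof essentially verbatim. You start from the sandwich $E_\ell^K(\xi_K;T,d)\le N_K(T)\le E_u^K(\xi_K;T,d)$, split it with the additive decomposition \eqref{eq:Estar-trivial-decomposition}, apply Lemmas~\ref{lem:conductor-term-trivial}, \ref{lem:gamma-trivial}, and \ref{cor:dedekind-euler-four-param}, and keep the polar term from Lemma~\ref{lem:polar-factor-trivial} explicit. Your check that the doubling $E_j^K=2E_j$ plays the role of the $L_\chi$/$L_{\bar\chi}$ pairing, so each local factor again contributes $q_u(q,\phi)$, is the normalization point the paper's one-line proof relies on implicitly.
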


\begin{proof}
By the decomposition
\[
E_\star^K(\xi_K;T,d)
=
E_\star^K(s(s-1);T,d)
+
E_\star^K(d_K^{s/2};T,d)
+
E_\star^K(\gamma_K;T,d)
+
E_\star^K(\zeta_K;T,d),
\]
for $\star\in\{u,\ell\}$, the result follows from
\[
        E_\ell^K(\xi_K;T,d)
        \le
        N_K(T)
        \le
        E_u^K(\xi_K;T,d),
\]
together with Lemmas \ref{lem:conductor-term-trivial},
\ref{lem:polar-factor-trivial}, \ref{cor:dedekind-euler-four-param}, and
\ref{lem:gamma-trivial}.
\end{proof}
Combining the upper and lower bounds gives the following symmetric form.
For non-trivial $\chi$,
\[
\left|
N(T,\chi)-M_\chi(T)
\right|
\le
C_1
\left(
\log A(\chi)
+
m_\chi\log\left(\frac{T}{2\pi}\right)
\right)
+
C_2m_\chi,
\]
where
\begin{equation}\label{const:C1C2}
      C_1=\frac{d a_1}{4},\qquad 
        C_2
        =
        \max\left\{
        \mathcal C_E+\mathcal G_u,\,
        \mathcal C_E-\mathcal G_\ell
        \right\}.
\end{equation}
Here again $\mathcal G_\ell$ is the signed constant appearing in the lower
gamma-factor estimate.\\
For the trivial character,
\[
\left|
N_K(T)-M_K(T)
\right|
\le
C_1
\left(
\log d_K
+
n_K\log\left(\frac{T}{2\pi}\right)
\right)
+
C_2n_K
+
C_3,
\]
where $C_1$ and $C_2$ are as above, and
\[
        C_3
        :=
        \sup_{T\ge1}
        \max\left\{
        E_u^K(s(s-1);T,d),
        -
        E_\ell^K(s(s-1);T,d)
        \right\}.
\]
Equivalently,
\[
E_u^K(s(s-1);T,d)
=
E_1^K(s(s-1);T,d)
+
E_2^K(s(s-1);T,d)
+
E_3^K(s(s-1);T,d)
+
E_4^K(s(s-1);T,d),
\]
and
\[
E_\ell^K(s(s-1);T,d)
=
E_1^K(s(s-1);T,d)
-
E_2^K(s(s-1);T,d)
+
E_3^K(s(s-1);T,d)
+
E_4^K(s(s-1);T,d).
\]
\subsection{Admissible tuples}

We use two admissible tuples. The first one gives a smaller leading
constant \(C_1\), while the second one gives a smaller value of \(C_2\), with
\(C_1\) and \(C_3\) matching the corresponding values in Amberger's estimate.\\
The first tuple is
\begin{equation}\label{eq:tuple-small-C1}
        d=0.713,\qquad
        a_1=1.061,\qquad
        a_2=0.940,\qquad
        a_3=0.315,\qquad
        a_4=-0.300.
\end{equation}
The second tuple is
\begin{equation}\label{eq:tuple-small-C2}
        d=0.722382,\qquad
        a_1=1.074223,\qquad
        a_2=0.925936,\qquad
        a_3=0.327241,\qquad
        a_4=-0.138682.
\end{equation}
The certified intermediate constants for these tuples are listed in
Table~\ref{tab:certified-intermediate-constants}.
\begin{table}[ht]
\centering
\renewcommand{\arraystretch}{1.35}
\begin{tabular}{|>{\centering\arraybackslash}p{0.18\textwidth}|
                >{\centering\arraybackslash}p{0.34\textwidth}|
                >{\centering\arraybackslash}p{0.34\textwidth}|}
\hline
Quantity
& Tuple \eqref{eq:tuple-small-C1}
& Tuple \eqref{eq:tuple-small-C2} \\
\hline
$\mathcal C_E$
& $\mathcal C_E\le 9.168629168235$
& $\mathcal C_E\le 7.590620795745$ \\
\hline
$\mathcal G_u$
& $\mathcal G_u\le 0.315173228521$
& $\mathcal G_u\le 0.316$ \\
\hline
$\mathcal G_\ell$
& $\mathcal G_\ell\ge -0.250001$
& $\mathcal G_\ell\ge -0.251$ \\
\hline
\end{tabular}
\caption{Certified intermediate constants for the two admissible tuples.}
\label{tab:certified-intermediate-constants}
\end{table}
The resulting constants in the final zero-counting estimates are listed in
Table~\ref{tab:final-four-param-constants}.
\begin{table}[ht]
\centering
\renewcommand{\arraystretch}{1.35}
\begin{tabular}{|>{\centering\arraybackslash}p{0.22\textwidth}|
                >{\centering\arraybackslash}p{0.22\textwidth}|
                >{\centering\arraybackslash}p{0.22\textwidth}|
                >{\centering\arraybackslash}p{0.22\textwidth}|}
\hline
Tuple
& Case
& $(C_1,C_2,C_3)$
& Improved constant \\
\hline
\eqref{eq:tuple-small-C1}
& Non-trivial Artin
& $(0.1892,\,9.484,\,0)$
& Smaller \(C_1\) \\
\hline
\eqref{eq:tuple-small-C1}
& Trivial/Dedekind
& $(0.1892,\,9.484,\,2.007)$
& Smaller \(C_1\) \\
\hline
\eqref{eq:tuple-small-C2}
& Non-trivial Artin
& $(0.194,\,7.907,\,0)$
& Smaller \(C_2\) \\
\hline
\eqref{eq:tuple-small-C2}
& Trivial/Dedekind
& $(0.194,\,7.907,\,2.001)$
& Smaller \(C_2\) \\
\hline
\end{tabular}
\caption{Final constants obtained from the two admissible tuples.}
\label{tab:final-four-param-constants}
\end{table}

\section{Final remarks on possible improvements}

We briefly indicate a possible extension of the four-parameter method. One
can introduce two further parameters $a_5$ and $a_6$ by adding two
additional terms to the upper bound for $f(b,t;d)$. More
precisely, one adds the terms
\[
d^4a_5\{R_4(d+b,t)+R_4(d-b,t)\}
+
d^4a_6\{I_4(d+b,t)+I_4(d-b,t)\},
\]
where
\[
R_4(x,t)
=
\Re\frac{1}{(x+it)^4}
=
\frac{x^4-6x^2t^2+t^4}{(x^2+t^2)^4},
\]
and
\[
I_4(x,t)
=
-\Im\frac{1}{(x+it)^4}
=
\frac{4xt(x^2-t^2)}{(x^2+t^2)^4}.
\]
The rest of the argument is formally the same as in the four-parameter case.
The two new terms give rise to two additional operators involving the third
derivative of the logarithmic derivative, and the Euler and gamma
contributions must then be recomputed with the corresponding extra local
terms. For example, with the tuple
\[
\begin{aligned}
d&=0.720,        &
a_1&=0.98120,   &
a_2&=0.95435,   \\
a_3&=0.46130,   &
a_4&=-0.29676,  &
a_5&=0.06448,   &
a_6&=0.14232,
\end{aligned}
\]
we obtain the constants in Table~\ref{tab:six-param-final-constants}. These values suggest that the leading constant $C_1$ can be reduced
substantially by adding further parameters. However, in this particular
six-parameter example, the lower-order constants $C_2$ and $C_3$ become
larger than in the four-parameter version. Thus the four-parameter method
appears to provide a better overall balance among the constants, while the
six-parameter variant may be useful if one is primarily interested in
minimizing the leading coefficient.
\begin{table}[ht]
\centering
\renewcommand{\arraystretch}{1.35}
\begin{tabular}{|c|c|c|c|}
\hline
Case & $C_1$ & $C_2$ & $C_3$ \\
\hline
Non-trivial Artin case
& $0.1767$
& $15.805$
& $0$ \\
\hline
Trivial/Dedekind case
& $0.1767$
& $15.805$
& $2.033$ \\
\hline
\end{tabular}
\caption{Constants obtained from a preliminary six-parameter computation.}
\label{tab:six-param-final-constants}
\end{table}

We emphasize that this six-parameter tuple is only preliminary. Our
computation was based on a direct search for admissible parameters followed by rigorous certification, rather than on a convex-optimization formulation.
For fixed \(d\), however, the admissibility inequalities are linear in the
parameters \(a_1,\ldots,a_6\), so the admissible region is convex. It is
therefore plausible that more systematic fixed-\(d\) searches, for instance, based on discretized convex optimization, could lead to better six-parameter tuples than the preliminary example above. In particular, one might be able to find tuples with nonzero \(a_5\) and \(a_6\) that reduce
\(C_1\) without increasing \(C_2\) as much as in our preliminary computation. 

For the four-parameter family, there is a simple obstruction to making \(C_1\)
arbitrarily small. Indeed, there are some limiting constraints obtained from the admissible inequality when \(|t|\to0\) and \(|t|\to\infty\) which imply
\[
        C_1=\frac{d a_1}{4}>\frac18.
\]
This particular obstruction disappears once further even terms, such as the \(R_4\)-term above, are introduced. This suggests that adding more
parameters may reduce the leading constant further. Some preliminary
computations with eight parameters show that \(C_1\) can be pushed slightly further, to about \(0.1729\), but the corresponding \(C_2\) becomes larger, around \(19.4\). Thus, the numerical evidence suggests that higher-parameter variants of our approach can reduce the leading constant, but at the cost of a rapid growth in the lower-order constants.

It would be interesting to understand the limiting behaviour of this optimization problem. In particular, one may ask whether the infimum of \(C_1\), taken over all versions of this method with finitely many additional higher-order terms, is equal to \(0\). Nevertheless, we shall not pursue this question and the above-mentioned optimization problems further here, and reserve them as a future research direction.

\section*{Acknowledgments}

The authors thank Andrew Fiori,  Nathan Ng, and Tim Trudgian for their comments and suggestions.

\printbibliography

@article{KN12, 
    AUTHOR = {Kadiri, H. and Ng, N.},
     TITLE = {Explicit zero density theorems for {D}edekind zeta functions},
   JOURNAL = {J. Number Theory},
  FJOURNAL = {Journal of Number Theory},
    VOLUME = {132},
      YEAR = {2012},
    NUMBER = {4},
     PAGES = {748--775},
      ISSN = {0022-314X},
   MRCLASS = {11R42 (11M41)},
  MRNUMBER = {2887617},
MRREVIEWER = {St\'{e}phane R. Louboutin},
       DOI = {10.1016/j.jnt.2011.09.002},
       URL = {https://doi.org/10.1016/j.jnt.2011.09.002},
}

@article{McC84,
    AUTHOR = {McCurley, K. S.},
     TITLE = {Explicit estimates for the error term in the prime number
              theorem for arithmetic progressions},
   JOURNAL = {Math. Comp.},
  FJOURNAL = {Mathematics of Computation},
    VOLUME = {42},
      YEAR = {1984},
    NUMBER = {165},
     PAGES = {265--285},
      ISSN = {0025-5718},
   MRCLASS = {11N13 (11-04 11Y35)},
  MRNUMBER = {726004},
MRREVIEWER = {Matti Jutila},
       DOI = {10.2307/2007579},
       URL = {https://doi.org/10.2307/2007579},
}

@article{Ro41,
    AUTHOR = {Rosser, B.},
     TITLE = {Explicit bounds for some functions of prime numbers},
   JOURNAL = {Amer. J. Math.},
  FJOURNAL = {American Journal of Mathematics},
    VOLUME = {63},
      YEAR = {1941},
     PAGES = {211--232},
      ISSN = {0002-9327},
   MRCLASS = {10.0X},
  MRNUMBER = {3018},
MRREVIEWER = {R. D. James},
       DOI = {10.2307/2371291},
       URL = {https://doi.org/10.2307/2371291},
}

@article{Tr15,
    AUTHOR = {Trudgian, T. S.},
     TITLE = {An improved upper bound for the error in the zero-counting
              formulae for {D}irichlet {$L$}-functions and {D}edekind
              zeta-functions},
   JOURNAL = {Math. Comp.},
  FJOURNAL = {Mathematics of Computation},
    VOLUME = {84},
      YEAR = {2015},
    NUMBER = {293},
     PAGES = {1439--1450},
      ISSN = {0025-5718},
   MRCLASS = {11M06 (11M26 11R42)},
  MRNUMBER = {3315515},
MRREVIEWER = {Caroline L. Turnage-Butterbaugh},
       DOI = {10.1090/S0025-5718-2014-02898-6},
       URL = {https://doi.org/10.1090/S0025-5718-2014-02898-6},
}

@article{Trudgian20121053,
	author = {Trudgian,  T. S.},
	title = {An improved upper bound for the argument of the Riemann zeta-function on the critical line},
	year = {2012},
	journal = {Math. Comp.},
	volume = {81},
	number = {278},
	pages = {1053--1061},
}

@article{Tr14-2,
    AUTHOR = {Trudgian, T. S.},
     TITLE = {An improved upper bound for the argument of the {R}iemann
              zeta-function on the critical line {II}},
   JOURNAL = {J. Number Theory},
  FJOURNAL = {Journal of Number Theory},
    VOLUME = {134},
      YEAR = {2014},
     PAGES = {280--292},
      ISSN = {0022-314X},
   MRCLASS = {11M06 (11M26)},
  MRNUMBER = {3111568},
MRREVIEWER = {Haseo Ki},
       DOI = {10.1016/j.jnt.2013.07.017},
       URL = {https://doi.org/10.1016/j.jnt.2013.07.017},
}

@article{Ba18,
AUTHOR={Backlund, R. J.},
     TITLE = {\"{U}ber die {N}ullstellen der {R}iemannschen {Z}etafunktion},
   JOURNAL = {Acta Math.},
  FJOURNAL = {Acta Mathematica},
    VOLUME = {41},
      YEAR = {1916},
    NUMBER = {1},
     PAGES = {345--375},
      ISSN = {0001-5962},
   MRCLASS = {DML},
  MRNUMBER = {1555156},
       DOI = {10.1007/BF02422950},
       URL = {https://doi.org/10.1007/BF02422950},
}

@article{Gr13, 
AUTHOR={Grossmann, J.},
TITLE={\"Uber die Nullstellen der Riemannschen Zeta-Funktion und der Dirichletschen $L$-Funktionen},
note={PhD thesis, Georg-August-Universit\"at G\"ottingen},
year={1913},
}

@article{vMo05,
    AUTHOR = {Von Mangoldt, H. C. F.},
     TITLE = {Zur {V}erteilung der {N}ullstellen der {R}iemannschen
              {F}unktion {$\xi(t)$}},
   JOURNAL = {Math. Ann.},
  FJOURNAL = {Mathematische Annalen},
    VOLUME = {60},
      YEAR = {1905},
    NUMBER = {1},
     PAGES = {1--19},
      ISSN = {0025-5831},
   MRCLASS = {DML},
  MRNUMBER = {1511287},
       DOI = {10.1007/BF01447494},
       URL = {https://doi.org/10.1007/BF01447494},
}

@article{HASANALIZADE2022219,
title = {Counting zeros of the Riemann zeta function},
journal = {J. Number Theory},
volume = {235},
pages = {219-241},
year = {2022},
author = {Elchin Hasanalizade and Quanli Shen and Peng-Jie Wong},
}

@article{HSW21-Dedekind,
  title={Counting zeros of Dedekind zeta functions},
  author={Elchin Hasanalizade and Quanli Shen and Peng-Jie Wong},
  journal={Math. Comp.},
  year={2021},
  volume={91},
  pages={277-293},
}

@article{BMOR20,
	title = {Counting zeros of {Dirichlet} $L$-functions},
	volume = {90},
	number = {329},
	journal = {Math. Comp.},
	author = {Michael A. Bennett  and Greg Martin and Kevin O’Bryant  and Andrew Rechnitzer},
	year = {2021},
	pages = {1455--1482},
}

@article{PLATT2015842,
title = {An improved explicit bound on $|\zeta(1/2+it)|$},
journal = {J. Number Theory},
volume = {147},
pages = {842-851},
year = {2015},
author = {Platt, D. J. and Trudgian,  T. S.}
}

@inproceedings{martinet1977,
  author    = {Martinet, J.},
  title     = {Character theory and Artin L-functions},
  booktitle = {Algebraic Number Fields},
  editor    = {Fr\"ohlich, A.},
  year      = {1977},
  publisher = {Academic Press},
  address   = {London},
  pages     = {1--87},
}

@article{Wong2018,
author = {Wong, Peng-Jie},
title = {Applications of group theory to conjectures of Artin and Langlands},
journal = {International Journal of Number Theory},
volume = {14},
number = {03},
pages = {881-898},
year = {2018},
}

@article{murty1988modular, 
title={Modular forms and the Chebotarev density theorem}, author={Murty, M. Ram and Murty, V. Kumar and Saradha, N.}, journal={American Journal of Mathematics}, 
volume={110}, 
number={2}, 
pages={253--281}, 
year={1988}, 
publisher={JSTOR} }

@book{Murty1997,
  author    = {Murty, V. K.},
  title     = {Modular Forms and the Chebotarev Density Theorem II},
  booktitle = {Analytic Number Theory},
  editor    = {Motohashi, Yoichi},
  publisher = {Cambridge University Press},
  year      = {1997},
  pages     = {287--308}
}

@article{Murty2018,
  author = {M. Ram Murty and V. Kumar Murty and Peng-Jie Wong},
  title = {The Chebotarev density theorem and the pair correlation conjecture},
  journal = {Journal of the Ramanujan Mathematical Society},
  volume = {33},
  year = {2018},
  pages = {399-426}
}

@article{amberger2026estimatingnumberzerosdedekind,
  title={Estimating the number of zeros of Dedekind zeta-functions},
  author={Amberger, Victor},
  journal={preprint, arXiv:2510.27444},
  year={2025}
}

@book {Neu99,
    AUTHOR = {Neukirch, J\"urgen},
     TITLE = {Algebraic number theory},
    SERIES = {Grundlehren der mathematischen Wissenschaften [Fundamental
              Principles of Mathematical Sciences]},
    VOLUME = {322},
      NOTE = {Translated from the 1992 German original and with a note by
              Norbert Schappacher,
              With a foreword by G. Harder},
 PUBLISHER = {Springer-Verlag, Berlin},
      YEAR = {1999},
     PAGES = {xviii+571},
      ISBN = {3-540-65399-6},
   MRCLASS = {11Rxx (11-02 11S15 11S31 14C40)},
  MRNUMBER = {1697859},
MRREVIEWER = {Cornelius\ Greither},
       DOI = {10.1007/978-3-662-03983-0},
       URL = {https://doi-org.proxyone.lis.nsysu.edu.tw/10.1007/978-3-662-03983-0},
}

@article{Tur53,
    AUTHOR = {Turing, A. M.},
     TITLE = {Some calculations of the {R}iemann zeta-function},
   JOURNAL = {Proc. London Math. Soc. (3)},
  FJOURNAL = {Proceedings of the London Mathematical Society. Third Series},
    VOLUME = {3},
      YEAR = {1953},
     PAGES = {99--117},
      ISSN = {0024-6115,1460-244X},
   MRCLASS = {65.0X},
  MRNUMBER = {55785},
MRREVIEWER = {D.\ H.\ Lehmer},
       DOI = {10.1112/plms/s3-3.1.99},
       URL = {https://doi-org.proxyone.lis.nsysu.edu.tw/10.1112/plms/s3-3.1.99},
}

@article{Tun81,
    AUTHOR = {Tunnell, Jerrold},
     TITLE = {Artin's conjecture for representations of octahedral type},
   JOURNAL = {Bull. Amer. Math. Soc. (N.S.)},
  FJOURNAL = {American Mathematical Society. Bulletin. New Series},
    VOLUME = {5},
      YEAR = {1981},
    NUMBER = {2},
     PAGES = {173--175},
      ISSN = {0273-0979,1088-9485},
   MRCLASS = {12A67 (10D40 10D45)},
  MRNUMBER = {621884},
MRREVIEWER = {Stephen\ Gelbart},
       DOI = {10.1090/S0273-0979-1981-14936-3},
       URL = {https://doi-org.proxyone.lis.nsysu.edu.tw/10.1090/S0273-0979-1981-14936-3},
}

@book{Lan80,
    AUTHOR = {Langlands, Robert P.},
     TITLE = {Base change for {${\rm GL}(2)$}},
    SERIES = {Annals of Mathematics Studies},
    VOLUME = {No. 96},
 PUBLISHER = {Princeton University Press, Princeton, NJ},
      YEAR = {1980},
     PAGES = {vii+237},
      ISBN = {0-691-08263-4},
   MRCLASS = {10D40 (10-02 12A67 22E55)},
  MRNUMBER = {574808},
MRREVIEWER = {Stephen\ Gelbart},
}

@article{KH09-1,
  title={Serre’s modularity conjecture (I)},
  author={Khare, Chandrashekhar and Wintenberger, Jean-Pierre},
  journal={Inventiones mathematicae},
  volume={178},
  number={3},
  pages={485--504},
  year={2009},
  publisher={Springer}
}

@article{KH09-2,
  title={Serre’s modularity conjecture (II)},
  author={Khare, Chandrashekhar and Wintenberger, Jean-Pierre},
  journal={Inventiones mathematicae},
  volume={178},
  number={3},
  pages={505--586},
  year={2009},
  publisher={Springer}
}

@article{Ra02,
  title={Modularity of solvable Artin representations of GO (4)-type},
  author={Ramakrishnan, Dinakar},
  journal={International Mathematics Research Notices},
  volume={2002},
  number={1},
  pages={1--54},
  year={2002},
  publisher={Hindawi Publishing Corporation}
}

@article{BeWo25,
  title={Improved estimates for the argument and zero-counting function of the Riemann zeta-function},
  author={Bellotti, Chiara and Wong, Peng-Jie},
  journal={Mathematics of Computation},
  year={2025}
}

@article{Gr17,
  title={Explicit zero-counting theorem for Hecke--Landau zeta-functions},
  author={Grze{\'s}kowiak, Maciej},
  journal={Bulletin of the Australian Mathematical Society},
  volume={95},
  number={3},
  pages={400--411},
  year={2017},
  publisher={Cambridge University Press}
}

@article{Pa2019,
  title={On the explicit upper and lower bounds for the number of zeros of the Selberg class},
  author={Paloj{\"a}rvi, Neea},
  journal={Journal of Number Theory},
  volume={194},
  pages={218--250},
  year={2019},
  publisher={Elsevier}
}

\end{document}